\documentclass[a4paper]{article}

\usepackage[utf8]{inputenc}
\usepackage[T1]{fontenc}
\usepackage[francais,english]{babel}
\usepackage{refcount}
\usepackage{amsmath,amssymb,amsfonts,amsthm}
\usepackage{mathrsfs}
\usepackage{amsfonts}
\usepackage{amssymb}
\usepackage{latexsym}
\usepackage{comment}
\usepackage[normalem]{ulem}
\usepackage[colorlinks=true, linkcolor=red, citecolor=red, urlcolor=blue]{hyperref}

\usepackage{xcolor}

\newcommand{\RG}{\color{blue}}

\def\R{\mathbb R}
\def\N{\mathbb N}

\def\C{\mathbb C}

\def\O{\Omega}
\def\a{\alpha}

\def\et0{e^{tA}x_0}

\numberwithin{equation}{subsection}

\makeatletter
\renewcommand{\theequation}{%
  \ifnum\value{subsection}=0
    \mbox{\thesection.\arabic{equation}}%
  \else
    \mbox{\thesubsection.\arabic{equation}}%
  \fi
}
\makeatother

\newtheorem{thm}{Theorem}

\newtheorem{Def}{Definition}

\newtheorem{Cor}{Corollary}
\newtheorem{rk}{Remark}
\newenvironment{contrk}[1]{%
 
  \setcounter{rk}{\getrefnumber{#1}}%

  \addtocounter{rk}{-1}%

  \begin{rk}[\textbf{continued}]%
}{%
  \end{rk}%
}

\author{
\textsc{Shri Lal Raghudev Ram Singh,}
\footnote{
University of Waterloo, email: \texttt{slrrsingh@uwaterloo.ca}
}
P. \textsc{Cannarsa,}
\footnote{Universit\`a di Roma Tor Vergata, via della Ricerca Scientifica 1, 00133, Roma, Italy, 
email: \texttt{cannarsa@axp.mat.uniroma2.it} (corresponding author)} 
R. \textsc{Guglielmi}
\footnote{University of Waterloo,
email: \texttt{roberto.guglielmi@uwaterloo.ca}}
\thanks{This research has been performed in the framework of the GDRE CONEDP.
}
}

\title{Uniform stabilization for relatively bounded perturbations of generators of semigroup}
\date{}

\begin{document}

\maketitle

\begin{abstract}
In this paper, we study the robustness of exponential stability for semigroups generated by linear operators under perturbations. Extending a classical result of Gibson’s Stability Theorem, we show that if the generator of an analytic exponentially stable semigroup is perturbed by a class of relatively bounded operators satisfying certain assumptions, then exponential stability is preserved, provided the perturbed semigroup is strongly stable. We also show that, for a restricted class of perturbations, the analyticity requirement can be relaxed to Gevrey regularity. Moreover, we present applications to uniformly parabolic equations, degenerate/singular parabolic equations, coupled hyperbolic plate systems, and generalized coupled systems of Kirchhoff-Love plates and a membrane-like electric network.

\end{abstract}

\bigskip
\noindent
\textbf{Key words: } Analytic Semigroup, Gevrey's Semigroup, robust exponential stability, stabilization, linear evolution equations.

\smallskip
\noindent
\textbf{AMS subject classifications: } Primary: 47A55, 47D06; Secondary: 93D23, 34G10, 35L10, 93D20.

%%%%%%%%%%%%%%%%%%%%%%%%%%%%%%%%%%%%%%%%%%%%%%%%%%%%%%%%%%%%%%%%%%%%%%%%%%%%%%%%%%
%%%%%%%%%%%%%%%%%%%%%%%%%%%%%%%%%%%%%%%%%%%%%%%%%%%%%%%%%%%%%%%%%%%%%%%%%%%%%%%%%%
\section{Introduction}\label{intro}
%%%%%%%%%%%%%%%%%%%%%%%%%%%%%%%%%%%%%%%%%%%%%%%%%%%%%%%%%%%%%%%%%%%%%%%%%%%%%%%%%%
%%%%%%%%%%%%%%%%%%%%%%%%%%%%%%%%%%%%%%%%%%%%%%%%%%%%%%%%%%%%%%%%%%%%%%%%%%%%%%%%%%

%{\RG \large Double check capital letters in the reference list.\\ }
%---------------------------------------------------------------------------------
A classical problem in control theory concerns the stabilization of linear and nonlinear Partial Differential Equations (PDEs). Different approaches to this problem have been developed; see, for example, control-theoretic method \cite{Russell-1975}, stabilization using linear feedback operators \cite{Gibson-1980, Russell-1969, Russell-1972, Slemrod-1974, Triggiani-1989}, and indirect stabilization \cite{Russell-1993, Kapitonov-1996}.

A well-known result by Gibson \cite{Gibson-1980} ensures the permanence of exponential stability for a proper perturbation applied to the generator of a uniformly stable semigroup of bounded operators. Specifically, Gibson's stability theorem guarantees that exponential stability is preserved under compact perturbations of the generator of an exponentially stable semigroup, provided the perturbed semigroup is strongly stable. This result was generalized in \cite{AB-C}, where the authors relaxed the requirement of compactness of the perturbation operator, replacing it with an assumption on the boundedness of the perturbation operator combined with its compactness on trajectories of the system. This simple observation significantly broadens the class of systems that satisfy this condition, and it is based on a constructive proof of Gibson's result on reflexive Banach spaces.% for bounded perturbations.

The result in~\cite{AB-C} shows how one can relax a regularity assumption on the perturbation operator and replace it with a suitable assumption on the interplay between the perturbation and the dynamics itself. Their constructive proof hints to the possibility of appropriately tuning the regularity of the perturbation operator in terms of the regularity of the system dynamics in order to preserve the exponential stability of the perturbed system. In this paper, we investigate in depth such interplay and extend Gibson's stability result to accommodate relatively bounded perturbations. % (see Theorem~\ref{thmanalyticgib} and, while keeping the  exponential stability robust. 
The main result of the paper, Theorem~\ref{thmanalyticgib} in Section~\ref{sec:main}, states that if the generator of an analytic exponentially stable semigroup is perturbed by a relatively bounded perturbation which is compact on trajectories of the system, then the perturbed operator generates an exponentially stable semigroup, provided it is strongly stable.
\begin{comment}
     Indeed, it is well known \cite{CurZwa} that the standard heat equation
\begin{equation}
\left\{
\begin{array}{ll}
y_t(x,t) = y_{xx}(x,t) + \mu y(x,t) & \text{on } (0,L)\times (0,+\infty) \\
y(0,t) = y(L,t) = 0 & \text{on } (0,+\infty) \\
y(x,0) = y_0(x) & \text{on } (0,L)
\end{array}
\right.
\end{equation}
is unstable for values $\mu\ge \lambda_1$, where $\lambda_1$ stands for the least eigenvalue of the operator $-\partial^2_{xx}$ on $H^1_0(0,L)$.
\end{comment}
%{\RG [Complete Introduction including Alabau-Cannarsa, Gibson, Triggiani, and so on...]}
The proof is constructive, and follows similar arguments as in~\cite{AB-C}. This allows us to generalize the main result to generators of exponentially stable semigroups which are not analytic.  Indeed, we can relax the regularity assumption of analyticity by replacing it with Gevrey regularity of a given order, while considering a smaller class of relatively bounded perturbations of appropriate regularity (see Corollary~\ref{gev-theta-sub}).

%Towards the end, we also provide two applications of our result to the case of perturbations applied to the generator of an exponentially stable semigroup, which is not analytic but belongs to some Gevrey class.

The rest of the paper is organized as follows. In the next section, we introduce and compare different notions of relatively bounded perturbations, and we state and prove our main results. In Section~\ref{sec:applic}, we illustrate the applications of our results to prove the exponential stability of several classes of partial differential equations, including a uniformly parabolic equation in Section~\ref{uniparsec}, a degenerate/singular parabolic equations in Section~\ref{degsingsec},  coupled plate dynamics in Section~\ref{coupledplate}, and lastly, the generalized coupled system of Kirchhoff-Love plates and a membrane-like electric network in Section~\ref{couplekirchhof-elect}.

\section{Main result}\label{sec:main}

We begin this section by defining three notions of relatively bounded perturbations of a linear operator $A:D(A)\to X$, where $X$ is a reflexive Banach space with norm $\|\cdot\|$. We will denote the standard linear operator norm by $\|\cdot\|_{\mathcal{L}}$.

\begin{Def}[$A$-boundedness, \cite{kato}]\label{Abound}
A linear operator $B:D(B)\to X$ is said to be \emph{$A$-bounded} if \( D(A) \subseteq D(B) \), and there exist constants $a, b \geq 0$  such that 

\begin{equation}
\|Bx\| \;\le\; a\,\|x\| \;+\; b\,\|Ax\|,\qquad \forall x\in D(A).
\end{equation}

The greatest lower bound $b_0$ of all constants $b$ for which \eqref{Abound} holds is called the \emph{$A$-bound} of $B$ with respect to $A$.
\end{Def}

\begin{Def}[$\theta$-subordination {\cite[Chapter 1]{Mar88}}]\label{theta-sub}
A linear operator $ B : D(B) \to X $ is said to be \emph{$\theta$-subordinate} to $A$ if $D(A) \subseteq D(B)$  and, for some \( 0 \leq \theta < 1 \), there exists a constant $c>0$ such that
\[
\| Bx \| \le c\, \| Ax \|^{\theta} \| x \|^{1 - \theta}, \qquad \forall x \in D(A).
\]
\(B\) is also termed \emph{subordinate of order} \(\theta\) in the literature (see, for example, \cite[Definition 7.2]{Krein1971}).

\end{Def}
\begin{Def}[$(-A)^\theta$-boundedness]\label{athetabound}
A linear operator $B:D(B)\to X$ is called \emph{$(-A)^\theta$-bounded} if $D((-A)^\theta)\subseteq D(B)$ for some $0\leq \theta <1$, and there exists a constant $d>0$ such that 
\[
\|Bx\|\leq d\,\|(-A)^\theta x\|, \quad \forall x\in D((-A)^\theta).
\]

\end{Def}
Note that, if $A$ is the infinitesimal generator of an analytic semigroup, then $(-A)^\theta$-boundedness implies $\theta$-subordination with the same $0\le\theta < 1$ (although the converse is not true \cite{Markus-Mat1982}), and $\theta$-subordination for any $0\le\theta < 1$ implies $A$-boundedness. Indeed, by the classical interpolation inequality \cite[Theorem~6.10]{Pazy} and Young's inequality, for any $x \in D(A)$ and $0\le\theta < 1$, there exists a constant $c_\theta>0$ such that %{\RG [shall we denote such constant $c$ with a different symbol than $c$ in Definition~\ref{theta-sub}? For example, $c_\theta$.]}
\begin{equation}\label{rel-theta-Abound-relation}
 \|(-A)^\theta x\| 
   \leq c_\theta \|Ax\|^\theta \|x\|^{1 - \theta}
   \leq c_\theta(1-\theta)\|x\| + c_\theta\,\theta\|Ax\|.
\end{equation}
%{\RG [... and removing coefficient $d$ in all subsequent terms of the inequalities.]}
Thus, $A$-boundedness is the most general condition  among these three notions of relatively boundedness, whereas $(-A)^\theta$-boundedness is the most restrictive. We now present the main result of the paper, which ensures the robustness of exponential stability under $\theta$-subordinate perturbations.
\begin{thm}\label{thmanalyticgib}
Let $(A,D(A))$ be the generator of an analytic semigroup  $\left(e^{tA}\right)_{t\ge 0}$ on a reflexive Banach space $X$. Consider a linear operator $B:D(B)\to X$ such that 
\begin{itemize}
\item[i)] B is $\theta$-subordinate to A in the sense of Definition \ref{theta-sub};
\item[ii)] there exist $\omega > 0$ and $M>0$ such that $\|e^{tA}\|_{\mathcal{L}}\le M e^{-\omega t}$ for all $t\ge 0$;
\item[iii)] $Be^{tA}$ is compact for all $t>0$;
\item[iv)] $e^{t(A+B)}x\to 0$ as $t\to +\infty$ for all $x\in X$.
\end{itemize}
Then there exist positive constants $\tilde{M}$ and $\tilde{\omega}$ such that $\|e^{t(A+B)}\|_{\mathcal{L}} \le \tilde{M}e^{-\tilde{\omega}t}$ for all $t\ge 0$.
\end{thm}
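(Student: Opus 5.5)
We follow the constructive strategy of \cite{AB-C}. The goal is to show that for some $T>0$ we have $\|e^{T(A+B)}\|_{\mathcal{L}}<1$; the semigroup property then yields exponential decay.

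\textbf{Step 1: well-posedness.} By ii) we have $0\in\rho(A)$, so the interpolation inequality and Young's inequality show that $B$ has $A$-bound $0$. Indeed, $\|Bx\|\le \varepsilon\|Ax\|+C_\varepsilon\|x\|$ for every $\varepsilon>0$. The standard perturbation theorem for analytic semigroups (Pazy, Thm.~3.2.1) then gives that $A+B$ with domain $D(A)$ generates an analytic semigroup $S(t)=e^{t(A+B)}$. We use this in the Duhamel (variation of constants) form
\[
S(t)x=e^{tA}x+\int_0^t S(t-s)Be^{sA}x\,ds, \qquad x\in X.
\]
This formula is first justified on $D(A)$ and then extended by density. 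The extension needs integrability of $\|Be^{sA}\|_{\mathcal L}$ near $s=0$. For this we use i) together with analyticity: $\|Ae^{sA}\|\le C s^{-1}e^{-\omega s}$. Hence
\[
\|Be^{sA}x\|\le c\|Ae^{sA}x\|^\theta\|e^{sA}x\|^{1-\theta}\le C' s^{-\theta}e^{-\omega s}\|x\|.
\]
Since $\theta<1$, this singularity is integrable. This is exactly where $\theta$-subordination, rather than mere $A$-boundedness, is needed.

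\textbf{Step 2: compactness of the Duhamel term.} For $t>0$ define $K_t x=\int_0^t S(t-s)Be^{sA}x\,ds$. By iii) each $Be^{sA}$ is compact. Moreover $s\mapsto Be^{sA}$ is norm continuous on $(0,t]$: for $s\ge s_0>0$, write $Be^{sA}=Be^{s_0A/2}e^{(s-s_0/2)A}$, and analytic semigroups are norm continuous for positive times. Together with the $s^{-\theta}$ bound, the integral is a norm-convergent (Bochner-type) limit of Riemann sums of compact operators, after cutting off near $s=0$. The neglected piece has norm $O(\delta^{1-\theta})$, so $K_t$ is compact. I expect this step to be the main technical point. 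One must check carefully that $s\mapsto S(t-s)Be^{sA}$ is norm continuous on $[\delta,t]$. This holds because $S$ is strongly continuous, so it converges uniformly on compact sets, while $Be^{sA}$ is norm continuous.

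\textbf{Step 3: contradiction argument.} Fix $T$ such that $Me^{-\omega T}\le 1/4$, and suppose $\|S(nT)\|\not\to 0$. In fact we argue directly. By iv) and the uniform boundedness principle, $\sup_t\|S(t)\|=:L<\infty$. Since $K_T$ is compact, its dual argument, or directly the compactness of the image of the unit ball $\mathcal B$, gives the following: given $\varepsilon$, cover $K_T(\mathcal B)$ by finitely many balls centered at $y_1,\dots,y_N$. By iv), there is $\tau$ with $\|S(\tau)y_j\|\le\varepsilon$ for all $j$, so $\|S(\tau)K_Tx\|\le L\varepsilon+\varepsilon$ uniformly for $x\in\mathcal B$. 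Then
\[
\|S(\tau+T)x\|\le \|S(\tau)e^{TA}x\|+\|S(\tau)K_Tx\|\le L/4+(L+1)\varepsilon .
\]
This shows $\|S(t)\|\le L/2$ for some $t$, for every choice of starting point. Iterating this estimate, or more simply noting that $\|S(t_0)\|<1$ for some $t_0$ once the bound is rescaled, yields $\|S(t)\|_{\mathcal L}\le \tilde Me^{-\tilde\omega t}$. The rescaling works as follows: replace $T$ by a $T$ satisfying $LMe^{-\omega T}\le 1/4$, which gives $\|S(\tau+T)\|\le 1/2$ directly.

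Reflexivity is not essential in this route. It could be used instead to run the argument on adjoints, as in \cite{AB-C}, if Step 2's norm-continuity is delicate.
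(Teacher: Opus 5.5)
Your proof is correct, and it takes a genuinely different route from the paper's.

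\textbf{The paper's route.} The paper never shows that the Duhamel operator $\Lambda_T=\int_0^T e^{(T-s)(A+B)}Be^{sA}\,\mathrm{d}s$ is compact. It argues sequentially:
\begin{itemize}
\item it takes $t_n\to+\infty$ and near-maximizers $x_n$ of $\|\Lambda_{t_n}x\|$ on the unit ball;
\item it extracts a weakly convergent subsequence $x_n\rightharpoonup\bar x$, which is exactly where reflexivity is used;
\item it kills $\Lambda_{t_n}\bar x$ by strong stability;
\item it handles $\Lambda_{t_n}(x_n-\bar x)$ by noting that each compact $Be^{sA}$ turns weak convergence into strong convergence for fixed $s>0$. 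Dominated convergence with the majorant $2cM_2^\theta M^{1-\theta}s^{-\theta}e^{-\omega s}$, integrable on all of $(0,\infty)$, together with the uniform bound $M_1$, then finishes.
\end{itemize}

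\textbf{Your route.} You fix a finite horizon $T$ and prove that $K_T$ is a compact operator. This uses norm continuity of $s\mapsto S(T-s)Be^{sA}$ on $[\delta,T]$, which follows from analyticity of $e^{tA}$, compactness of $Be^{sA}$ and strong continuity of $S$, plus the $O(\delta^{1-\theta})$ cut-off near $0$. You then use that strong convergence $S(\tau)\to 0$ is uniform on the relatively compact set $K_T(\mathcal B)$.

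\textbf{What each approach buys.}
\begin{itemize}
\item Your argument does not use reflexivity at all.
\item It needs only the local singularity $s^{-\theta}$ near $s=0$, not integrability of the majorant on $(0,\infty)$.
\item It justifies generation of $e^{t(A+B)}$ and the variation of constants formula, which the paper takes for granted.
\item The paper's argument needs no operator-norm continuity in $s$ and no compactness of operator-valued integrals, only pointwise-in-$s$ compactness and dominated convergence. That makes it shorter, and it is literally the argument of \cite{AB-C} with the bound on $B$ replaced by the majorant.
\item Your Step 2 also carries over to the setting of Corollary~\ref{gev-theta-sub}. Differentiable semigroups are norm continuous for $t>0$, and $s^{-\theta\beta}$ is integrable near $0$ when $\theta\beta<1$.
\end{itemize}

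\textbf{A small point in Step 3.} With $Me^{-\omega T}\le 1/4$ you only get $\|S(\tau+T)\|\le L/4+(L+1)\varepsilon$. This need not be below $1$, and iterating or squaring such a bound does not help when $L\ge 2$. The choice $LMe^{-\omega T}\le 1/4$, which you give at the end, is what closes the argument, since it yields $\|S(t_0)\|<1$. You can drop the first version.
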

 The proof follows closely \cite[Theorem 1.1]{AB-C}, which assumes that $(A,D(A))$ generates a strongly continuous semigroup on $X$ and $B$ is a bounded operator. Here instead we relax the boundedness assumption on $B$, while requiring more regularity on the semigroup $\left(e^{tA}\right)_{t\ge 0}$. %In Corollary~\ref{thmthetabound} we will generalize Theorem~\ref{thmanalyticgib} by demonstrating the interplay between the regularity of the semigroup $e^{tA}$ and the operator $B$ to preserve the exponential stability of the perturbed semigroup.
%While in \cite{AB-C} the authors relied on the boundedness of the perturbation $B$, we here exploit the analyticity of $A$ to obtain estimate for $A e^{tA}$, since for $\theta$-subordinate perturbations, $B$ can be controlled in terms of $A$. We include the proof for completeness.

 %{\RG [comment also on the portion of the proof that requires care since it differs from the proof of~\cite[Theorem 1.1]{AB-C}.]} %We include it for completeness.
\begin{proof}

Notice that the Banach-Steinhaus Theorem ensures that
\begin{equation}\label{ubp}
\|e^{t(A+B)}\|_{\mathcal{L}} \le M_1\quad \forall t\ge 0\, ,
\end{equation}
for some constant $M_1 > 0$. 
From a well-known characterization of exponential stability for strongly continuous semigroups \cite{EnNa}, our claim is equivalent to show the limit
\begin{equation}\label{stpun}
\lim_{t\to +\infty} \|e^{t(A+B)}\|_{\mathcal{L}} = 0\, .
\end{equation}
Let us now define
\begin{equation}
\Lambda_t x := \int_0^t e^{(t-s)(A+B)}Be^{sA}x\;   \mathrm{d}s\, ,\quad\forall x\in X\, ,\ \forall t\ge 0\, .
\end{equation}
%{\RG [Just a comment on latex aesthetics: let's write the differentials in the integrals as follows:
%\[
%\Lambda_t x := \int_0^t e^{(t-s)(A+B)}Be^{sA}x\; %\mathrm{d}s\, ,
%\]
%which is more pleasant (and clear). Most integrals appear in the Applications section. Another layout comment: here we use $\forall x\in X$ after the equation. We shall make this notation uniform also with the previous part of the section with the definitions.]
Since $y(t) = e^{t A} x$ satisfies
\begin{equation*}
\begin{cases}
y'(t) = (A+B)y(t) - B y(t) \\
y(0) = x\, ,
\end{cases}
\end{equation*}
from the variation of constants formula we deduce that
\begin{equation}\label{varofcons}
e^{t(A+B)}x = e^{tA}x + \Lambda_t x,\, \quad\forall x\in X\, ,\ \forall t\ge 0\, .
\end{equation}
Thus, we have
\begin{equation}\label{stpoi}
\|e^{t(A+B)}\|_{\mathcal{L}} 
\le \sup_{|x|\le 1}\left\|e^{tA}x\right\| + \sup_{|x|\le 1}\left\|\Lambda_t x\right\|
\le M e^{-\omega t} + \sup_{|x|\le 1}\left\|\Lambda_t x\right\|\, .
\end{equation}
Consider now any sequence $t_n$ of positive numbers diverging as $n\to +\infty$, and choose vectors $x_n\in X$ such that $|x_n|\le 1$ and
\begin{equation}\label{approx1}
\sup_{|x|\le 1}\left\|\Lambda_{t_n} x\right\| < \|\Lambda_{t_n}x_n\| + \frac{1}{n}\, .
\end{equation} 
Owing to the Banach-Alaoglu Theorem, there exists a subsequence (still denoted by $x_n$ for simplicity) weakly convergent to some limit $\bar{x}\in X$. {Observe that, combining the exponential stability of $A$, the strong stability of $A+B$ and~\eqref{varofcons}, we have 
\begin{equation}\label{eqLambbarxgoestozero}
 \lim_{n\rightarrow +\infty}\|\Lambda_{t_n}\bar{x}\| = \lim_{n\rightarrow +\infty} \|e^{t_n(A+B)}\bar{x}-e^{t_nA}\bar{x}\| =0. 
\end{equation}}
Next, we prove that
\begin{equation}\label{stconcaplmb}
\Lambda_{t_n}(x_n - \bar{x}) \to 0\quad \text{strongly as }\ n\to +\infty\, .
\end{equation}
Indeed, introduce the sequence of vector-valued functions
$$
\phi_n(s) := B e^{sA}(x_n - \bar{x})\, ,\quad \forall s\ge 0\, .
$$
Hypothesis $iii)$ ensures that $\phi_n(s)$ strongly converges to zero as $n\to +\infty$ for all $s>0$. Moreover, since $A$ is analytic, there exist positive constants $\omega$ and $M_2$ such that
$$\|Ae^{tA}\|_{\mathcal{L}}<\frac{M_2}{t}e^{-\omega t},\qquad \forall t>0,$$
 and thanks to hypothesis $i)$ and $ii)$ we have that
% \begin{align}\label{estfrctpw}
% \|\phi_n(s)\| &= \|B e^{sA}(x_n - \bar{x})\|\nonumber \\
% &\leq c\, \|A e^{sA}(x_n - \bar{x})\|^{\theta}\, \|e^{sA}(x_n - \bar{x})\|^{1 - \theta}\nonumber \\
% &\leq 2c\, \|A e^{sA}\|^{\theta} \, \|e^{sA}\|^{1 - \theta} \nonumber\\
% &\leq  2c\,M_2^\theta\, M^{1-\theta}\, \frac{e^{-\omega s}}{s^{\theta}},\qquad \forall s>0.
% \end{align}
\begin{multline}\label{estfrctpw}
\|\phi_n(s)\| = \|B e^{sA}(x_n - \bar{x})\| \leq c\, \|A e^{sA}(x_n - \bar{x})\|^{\theta}\, \|e^{sA}(x_n - \bar{x})\|^{1 - \theta} \\
\leq 2c\, \|A e^{sA}\|^{\theta} \, \|e^{sA}\|^{1 - \theta} 
\leq  2c\,M_2^\theta\, M^{1-\theta}\, \frac{e^{-\omega s}}{s^{\theta}},\qquad \forall s>0.
\end{multline}

%{\RG [In Definition 2, we called the constant $c$ rather than $b$. Just in case we want to keep a uniform notation with that too, just to avoid confusion.Before the equation, might be useful to recall the specific inequality that you are using here from the analyticity of $A$.\\Finally, should the exponent $\omega$ in $e^{-\omega s}$ depend on $\theta$, or does that cancel out? This should be clear after specifying the inequality for $A$ analytic.]\\} 
Therefore, Lebesgue's dominated convergence theorem for vector-valued functions implies that $\phi_n\to 0$ in $L^1(0,+\infty;X)$ as $n\to +\infty$. Hence, using inequality~\eqref{ubp} we get that
$$
\|\Lambda_{t_n}(x_n - \bar{x})\| \le M_1\int_0^\infty \|\phi_n(s)\| \;\mathrm{d}s \to 0\quad \text{as}\quad n\to +\infty\, ,
$$
which implies \eqref{stconcaplmb}. By collecting \eqref{stpoi}, \eqref{approx1}, \eqref{eqLambbarxgoestozero} and \eqref{stconcaplmb}, we conclude that
\begin{equation*}\label{finstp}
\|e^{t_n(A+B)}\|_{\mathcal{L}}\le Me^{-\omega t_n} + \frac{1}{n} + \|\Lambda_{t_n}(x_n - \bar{x})\| + \|\Lambda_{t_n}\bar{x}
\| \to 0\quad \text{as}\quad n\to +\infty\, .
\end{equation*}
%{\RG [We shall comment here why does $|\Lambda_{t_n}\bar{x}|\to 0$ as $n\to +\infty$. Does it go to zero? Combining \eqref{stpoi}, \eqref{approx1} and \eqref{stconcaplmb}, I get that the first three terms of the RHS go to zero.]\\}
Since $t_n$ is an arbitrary sequence tending to infinity, this implies \eqref{stpun}, which proves the desired claim.
\end{proof}

 %{\RG [Now that I read it, Corollary 1 is so obvious that does not seem worth stating it as a Corollary, nor proving it. We can discuss this further if you think differently. The only difference is in the assumption \emph{(i)}, right? Everything else in the statement seems just the same. See also comments after Remark 1 before making any changes]}.

%{\RG [The fact that the argument does not hold for $A$-bounded perturbations would be better highlighted in a remark environment.]}

A closer look at the constructive proof of Theorem \ref{thmanalyticgib} shows that the previous argument can be generalized to establish exponential stability under $\theta$-subordinate perturbations by weakening the analyticity assumption on $\left(e^{tA}\right)_{t\ge 0}$ to a suitable class of differentiable semigroups, while compensating for such loss of regularity by allowing a smaller class of $\theta$-subordinate perturbations $B$. 
\begin{Cor}\label{gev-theta-sub}
Let \( (A,D(A)) \) be the generator of a  differentiable semigroup $\left(e^{tA}\right)_{t\ge 0}$ on a reflexive Banach space $X$ such that
\begin{equation}\label{derivativeestimate}
    \|Ae^{tA}\|_{\mathcal{L}}\le Ct^{-\beta} \quad \text{for some} \quad C>0 \quad\text{and}\quad \beta \ge 1.
\end{equation}
Suppose a linear operator \( B:D(B)\to X \) satisfies:
\begin{itemize}
\item[(i)] B is $\theta$-subordinate to A for some \( \theta \in [0,1/\beta) \);
\item[(ii)] There exist constants \( M, \omega > 0 \) such that \( \|e^{tA}\|_{\mathcal{L}} \le M e^{-\omega t} \) for all \( t \ge 0 \);
\item[(iii)] The operator \( B e^{tA} \) is compact for all \( t > 0 \);
\item[(iv)] The perturbed semigroup satisfies \( e^{t(A+B)}x \to 0 \) as \( t \to +\infty \) for all \( x \in X \).
\end{itemize}
Then there exist positive constants $\tilde{M}$ and $\tilde{\omega}$ such that $\|e^{t(A+B)}\|_{\mathcal{L}} \le \tilde{M}e^{-\tilde{\omega}t}$ for all $t\ge 0$.
\end{Cor}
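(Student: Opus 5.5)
We follow the proof of Theorem~\ref{thmanalyticgib} verbatim. The only place where analyticity was used is the integrable majorant~\eqref{estfrctpw} for $\phi_n(s)=Be^{sA}(x_n-\bar x)$, so that is the single step to redo.

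The unchanged steps are these. By (iv) and Banach--Steinhaus, \eqref{ubp} holds, and it suffices to prove~\eqref{stpun}. We define $\Lambda_t$ and obtain~\eqref{varofcons}. Note that $e^{t(A+B)}$ is well defined as a $C_0$-semigroup: this is implicit in (iv), exactly as in the theorem. For a sequence $t_n\to\infty$ we pick near-maximizers $x_n$ as in~\eqref{approx1} and extract a weakly convergent subsequence $x_n\rightharpoonup\bar x$ using reflexivity. Then \eqref{eqLambbarxgoestozero} follows from (ii) and (iv). Hypothesis (iii) gives $\phi_n(s)\to 0$ strongly for each $s>0$, since a compact operator maps weakly convergent sequences to strongly convergent ones.

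The new bound comes from $\theta$-subordination, (ii) and \eqref{derivativeestimate}, which give
\[
\|\phi_n(s)\|\le 2c\,\|Ae^{sA}\|_{\mathcal L}^{\theta}\,\|e^{sA}\|_{\mathcal L}^{1-\theta}\le 2c\,C^{\theta}M^{1-\theta}\,s^{-\beta\theta}e^{-(1-\theta)\omega s}.
\]
Near $s=0$ this is integrable precisely because $\beta\theta<1$, which is the role of the restriction $\theta\in[0,1/\beta)$. At infinity the factor $e^{-(1-\theta)\omega s}$ gives decay. This requires $\theta<1$, which holds since $\beta\ge1$.

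The estimate \eqref{derivativeestimate} may be intended only for small $t$, say $t\in(0,1]$. In that case the large-time part is handled by the semigroup property: for $s\ge1$,
\[
\|Ae^{sA}\|_{\mathcal L}\le \|Ae^{A}\|_{\mathcal L}\,\|e^{(s-1)A}\|_{\mathcal L}\le CMe^{\omega}e^{-\omega s}.
\]
Either way, $\|\phi_n(s)\|\le g(s)$ for a fixed $g\in L^1(0,\infty)$.

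Dominated convergence then yields $\phi_n\to0$ in $L^1(0,\infty;X)$. Hence
\[
\|\Lambda_{t_n}(x_n-\bar x)\|\le M_1\int_0^\infty\|\phi_n(s)\|\,\mathrm{d}s\to0,
\]
and combining this with \eqref{stpoi}, \eqref{approx1} and \eqref{eqLambbarxgoestozero} gives $\|e^{t_n(A+B)}\|_{\mathcal L}\to0$. Since $t_n$ was an arbitrary diverging sequence, \eqref{stpun} follows.

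The main obstacle is the integrability of the majorant near $s=0$. That is exactly where $\theta<1/\beta$ is needed, together with checking that the exponential decay survives at infinity.
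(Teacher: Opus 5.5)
Your proposal is correct and follows the paper's proof essentially verbatim. The only modified step is the majorant $\|\phi_n(s)\|\le 2c\,C^{\theta}M^{1-\theta}s^{-\beta\theta}e^{-(1-\theta)\omega s}$, which is integrable on $(0,\infty)$ because $\beta\theta<1$; after that, dominated convergence and the rest of the argument of Theorem~\ref{thmanalyticgib} go through unchanged, and your extra remark handling the case where \eqref{derivativeestimate} holds only for small $t$ (via the semigroup property for $s\ge 1$) is a correct refinement not present in the paper.
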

\begin{proof}
The proof follows the same lines as the proof of Theorem~\ref{thmanalyticgib} until equation~\eqref{eqLambbarxgoestozero}. However, to prove~\eqref{stconcaplmb}, we need to use a different argument in~\eqref{estfrctpw}. Thanks to the assumptions \emph{(i)}, \emph{(ii)} and~\eqref{derivativeestimate}, we get that
%the following bound on \( \|\phi_n(s)\| \)
\begin{multline}\label{estfrctpw2}
\|\phi_n(s)\| = \|B e^{sA}(x_n - \bar{x})\| \leq c\, \|A e^{sA}(x_n - \bar{x})\|^{\theta}\, \|e^{sA}(x_n - \bar{x})\|^{1 - \theta} \\
\leq 2c\, \|A e^{sA}\|^{\theta} \, \|e^{sA}\|^{1 - \theta} 
\leq  2c\,C^\theta\, M^{1-\theta}\, \frac{e^{-\omega s(1-\theta)}}{s^{\theta\beta}}\,,\qquad \forall s>0.
\end{multline}
% \begin{equation}\label{estfrctpw2}
% |\phi_n(s)| \leq 2c \|A e^{sA}\|^{\theta} \|e^{sA}\|^{1 - \theta}
% \leq \frac{2cC^\theta M^{1-\theta}}{s^{\theta\beta} }e^{-\omega s(1-\theta)}, 
% \quad \forall\, s>0 .
% \end{equation}
%where we used and exponential stability estimate, as well as the bound \( \|x_n - \bar{x}\| \le 2 \). For integrability, we require \
The condition \( \theta \beta < 1 \) ensures the integrability of the right-hand side as a function of $s$ on $(0,\infty)$, and therefore it implies the convergence of the vector valued functions \( \phi_n \) to zero in \( L^1(0,\infty; X) \) as \( n \rightarrow \infty \). The remainder of the proof then proceeds along the same lines as the proof of Theorem~\ref{thmanalyticgib}. 
\end{proof}
\begin{comment}
    \begin{rk}\label{gev-theta-sub}
    From the above proofs (see argument \eqref{estfrctpw} and \eqref{estfrctpw21}), it is clear that Theorem \ref{thmanalyticgib} and Corollary \ref{thmthetabound} will hold true under following conditions on A and B:
\begin{itemize}
\item[H1)] $(A,D(A))$ generates a differentiable semigroup $e^{tA}$ on a reflexive Banach space $X$ such that $\|Ae^{tA}\|\le Ct^{-\beta}$ for some $C>0$ and $\beta \ge 1$,
\item[H2)] \(B\) is \emph{$(-A)^\theta$-bounded} or \emph{$\theta$-subordinate} to \( A \) for some \( \theta \in [0,1/\beta) \),
\item[H3)] the same hypotheses i), ii), iii) as in Theorem \ref{thmanalyticgib}. {\RG [It should be the same hypotheses ii), iii), iv) as in the Theorem?]}
\end{itemize}
{\RG [Rather than stating and proving Corollary 1, it would probably make more sense to state Remark 1 as a Corollary (only with $\theta$-subordinate to \( A \)), and prove the slightly different estimate to get the conclusion.\\
I re-iterate here that would probably be more readable to state both Theorem 1 and Remark 1 only for $\theta$-subordinate to \( A \), and after both results have been stated to comment on the fact that both of them are still true if $B$ is $(-A)^\theta$-bounded.]\\
}
{
In particular, this means that we can weaken the analyticity assumption on $A$, while compensating such loss of regularity by allowing a smaller class of relatively bounded perturbations $B$ {\RG as described by H2)}.}
\end{rk}
\end{comment}

\begin{rk}
Corollary~\ref{gev-theta-sub} highlights the interplay between the regularity of the semigroup~$\left(e^{tA}\right)_{t\ge 0}$ in~\eqref{derivativeestimate} and of the operator $B$ in assumption {(i)} in order to preserve the exponential stability of the perturbed semigroup.
\end{rk}
\begin{rk}\label{rk:gevrey-sem}
It is worth noting that a differentiable semigroup satisfying~\eqref{derivativeestimate} for some $\beta\ge 1$ fulfills Gevrey regularity of class $\delta>\beta$~\cite{Tay}.  For the sake of completeness, we remind the definition of Gevrey semigroups.
\end{rk}

\begin{Def}
A strongly continuous semigroup $\left(T(t)\right)_{t\ge 0}$ on a Banach space $X$ is of Gevrey
class $\delta > 0$ for $t > t_0$ iff $\left(T(t)\right)_{t\ge 0}$ is differentiable for $t > t_0$ and for every compact $K \hookrightarrow (t_0,+\infty)$ and each $\mu > 0$
there exists a constant $C = C(\mu,K)$ such that
$$
\|T^{(n)}(t)\|_{\mathcal{L}}\le C\mu^n (n!)^\delta\quad \forall t\in K\, ,\ n = 0,1,2,\dots
$$
\end{Def}
\noindent
\begin{contrk}{rk:gevrey-sem}
 
Another sufficient condition for Gevrey regularity of class $\delta$ of the $C_0$-semigroup $\left(e^{tA}\right)_{t\ge 0}$ is given in terms of the following resolvent estimate \cite{Tay}: If
\begin{equation}\label{resolv-cond-gev}
\limsup_{|\lambda|\to\infty} |\lambda|^{1/\beta}\big\|(i\lambda I-A)^{-1}\big\|_{\mathcal{L}}<\infty \qquad \text{for some}\quad \beta\geq1,
\end{equation}
then $\left(e^{tA}\right)_{t\ge 0}$ is Gevrey of class $\delta > \beta$. This condition is often used to determine Gevrey regularity in the literature (see \cite{ Hao-Liu-Yong-2015, Grab-las, Lasiecka-Tebou}). %{\RG [This should be Sections 3.4 and 3.5? But at the moment we already say this at the end of the remark. Make sure to make this reference only once in the remark, wherever it sounds better.]}
This requires analyzing the connection between conditions~\eqref{derivativeestimate} and~\eqref{resolv-cond-gev}, since in general they are not equivalent. To this aim, notice that differentiable semigroups satisfying the estimate \eqref{derivativeestimate} belong to the Crandall-Pazy class on Banach spaces with parameter $1/\beta\in(0,1]$, which implies condition \eqref{resolv-cond-gev}~\cite{Crandall-Pazy}. However, \eqref{derivativeestimate} and condition \eqref{resolv-cond-gev} are equivalent for $C_0$-semigroups on Hilbert spaces~\cite{Wakaiki}, and hence Corollary \ref{gev-theta-sub} still holds for those semigroups on a Hilbert space whose Gevrey regularity is proved using \eqref{resolv-cond-gev}. This will indeed be the case in the examples considered in Sections~\ref{coupledplate} and~\ref{couplekirchhof-elect}.
\end{contrk}
%{\bf Remark~\ref{rk:gevrey-sem}. (continued):} %{\RG [we could eventually set up a new environment to have here a Remark with the same number as the previous one, but with the same characteristics of a Remark environment.]} 

{\begin{rk}\label{thmthetabound}
    As evident from the relations in~\eqref{rel-theta-Abound-relation} and the estimates in~\eqref{estfrctpw} and~\eqref{estfrctpw2}, respectively, Theorem~\ref{thmanalyticgib} and Corollary~\ref{gev-theta-sub} still hold when \(B\) is \emph{$(-A)^\theta$-bounded} (in the sense of Definition \ref{athetabound}).  However, the  argument fails  under $A$-bounded perturbations (in the sense of Definition \ref{Abound}), since in place of \eqref{estfrctpw}, we would have, for any $s>0$,
\begin{equation*}
\|\phi_n(s)\| = \|B e^{sA}(x_n - \bar{x})\| \le 2a \|e^{sA}\| + 2b\|Ae^{sA}\| \le 2Me^{-\omega s}\left[a + \frac{b}{s}\right],
%\quad \forall s>0,
\end{equation*}
which does not imply convergence of $\phi_n\rightarrow 0$ in $L^1(0,+\infty;X)$ as $n\rightarrow+\infty,$ because %bound $2Mbe^{-\omega s}/s$ on 
the right hand side is not  integrable on $(0,\infty)$.
\end{rk}

\begin{rk}
 
A related question of interest is whether the regularity of the given semigroup is preserved under relatively bounded perturbations of the semigroup generator of the type described in Definitions \ref{Abound}, \ref{theta-sub} and \ref{athetabound}. 
%We would like to mention that analyticity is closed under the above perturbations in Definitions \ref{Abound}, \ref{theta-sub} and \ref{athetabound}. 
A well known result~\cite[Section 3.2]{Pazy} ensures the closedness of analyticity for $A$-bounded perturbations in the sense of Definition \ref{Abound}---also referred to as relatively bounded perturbations in the literature (see \cite{kato,Pazy}).
That is, if $A$ is the infinitesimal generator of an analytic semigroup and $B$ is $A$-bounded, then $A + B$ is the infinitesimal generator of an analytic semigroup too. It follows readily from \eqref{rel-theta-Abound-relation} that this holds even for 
$\theta$-subordinate and $(-A)^{\theta}$-bounded perturbations.
% This can be easily verified from 
However, as noted in~\cite[Section 4]{Grab-las}, it is an open question whether Gevrey regularity is closed under relatively bounded perturbations of its generator. Nevertheless, a partial result in this direction is given in the same reference \cite[Lemma 4.2]{Grab-las}: if $A$ generates a differentiable semigroup on a Banach space $X$ satisfying assumption  \eqref{derivativeestimate} for some $\beta\ge 1$, and $B$ is $(-A)^\theta$-bounded for some $\theta<1/\beta$, %{\RG [of which type? Def. 1/2/3?]}
then $A+B$ generates a Gevrey semigroup of class $\delta$ for $t>0$, for every $\delta >\beta.$
\end{rk}

\section{Applications}\label{sec:applic}
In this section, we provide applications of the abstract results obtained in the previous section to various partial differential equations. 
In what follows, we adopt the following notations. The symbols $\lesssim$ and $\gtrsim$ denote inequalities that holds up to a positive multiplicative constant. The symbol $\rho(A)$ stands for the resolvent set of the operator~$A$. Time derivatives of first and second order of the function $u = u(x,t)$ are denoted by~$\partial_t u$ and $\partial_t^{2}u$, respectively, or equivalently by~$u_t$ and~$u_{tt}$. Finally, we denote by $\Omega \subset \mathbb{R}^d$, $d \geq 1$, a bounded domain in $\mathbb{R}^d$ with a sufficiently smooth boundary $\Gamma$. The notation $\|\cdot\|$ will be used to represent both the usual norm in the Hilbert space~$L^{2}(\Omega)$, as well as the Euclidean norm in $\mathbb{R}^d$.% unless otherwise specified. 

\subsection{A uniformly parabolic problem}\label{uniparsec}

 Let $A:D(A)\subset L^2(\Omega)\to L^2(\Omega)$ be the unbounded operator defined by
\begin{equation}\label{defoptA}
D(A) = H^2(\Omega)\cap H^1_0(\Omega)\, ,\qquad Au = \Delta u\quad \forall u\in D(A)\, .
\end{equation}
We analyze the stability of the parabolic problem
\begin{equation} \label{parbolic_eq}
\left\lbrace \begin{array}{ll}
\partial_t u = \Delta u  + b(x)\cdot \nabla u & \text{in } \Omega\times (0, +\infty)\,,\\
u =0 & \text{on } \Gamma\times (0, +\infty)\,,\\
u(0,\cdot) = u_0(\cdot)\in L^2(\Omega)\, ,
\end{array}\right.
\end{equation}
where $b:\Omega\to \mathbb{R}^d$ is a bounded measurable function of class ${C}^1$ on $\Omega$.  The goal is to consider the term $b(x)\cdot \nabla u$ as an unbounded perturbation of an exponentially stable system and determine appropriate conditions on the coefficient $b$ to be able to derive the exponential stability of~\eqref{parbolic_eq} from Theorem~\ref{thmanalyticgib}. Note that the operator $A$ in \eqref{defoptA} generates an analytic % $C_0$-
semigroup $\left(e^{tA}\right)_{t\ge 0}$ of compact operators on the Hilbert space $H = L^2(\Omega)$. % for every $t>0$. 
Moreover, the least eigenvalue $\lambda_\Omega$ of $-A$ is the Poincar\'e constant of $\Omega$, that is, the optimal positive constant such that
\begin{equation}\label{poincIneq}
\lambda_\Omega\int_\Omega u^2 \;\mathrm{d}x \le \int_\Omega |\nabla u|^2 \;\mathrm{d}x\qquad \forall u\in H^1_0(\Omega)\, .
\end{equation}
It is well known that the semigroup generated by $A$ is exponentially stable, since
$$
\|e^{tA}\|_{\mathcal{L}} \le e^{-\lambda_\Omega t}\qquad \forall t\ge 0\, ,
$$
and therefore condition $ii)$ of Theorem \ref{thmanalyticgib} holds. Suppose $b$ satisfies
\begin{equation}\label{Hb-1}
%\left\{
\begin{array}{ll}
\text{\textbf{H1)}} & \textbf{div} (b) \geq - 2 \lambda_\Omega\quad \text{a.e. in } \Omega\,,%(0,1), 
\\[1ex]
  \text{\textbf{H2)}} & \textbf{div} (b) > - 2 \lambda_\Omega\quad \text{a.e. in some open set } U\subseteq \Omega\, .%(0,1).
\end{array}
%\right.
\end{equation}

We introduce the operator $B: H^1_0(\Omega)\to L^2(\Omega)$ such that
$$
Bu(x) = b(x)\cdot\nabla u(x)\, ,\quad x\in \Omega\,,\qquad \forall u\in H^1_0(\Omega)\, .
$$

Thus, we can reformulate~\eqref{parbolic_eq} in the abstract form
\begin{equation} \label{abst_eq}
\left\lbrace \begin{array}{ll}
u'(t) = (A+B) u(t) & t\in (0, \infty)\,,\\
u(0) = u_0\in H\, . &
\end{array}\right.
\end{equation}
Note that $B$ is $1/2$-subordinate to $A$, since
\begin{multline*}
        \|Bu\| =  \Bigg(\int_\Omega \|b(x)\cdot \nabla u(x)\|^2 \;\mathrm{d}x \Bigg)^{1/2} \leq \|b\|_{L^\infty} \Bigg(\int_\Omega \|\nabla u\|^2 \;\mathrm{d}x\Bigg)^{1/2}\\
\leq\|b\|_{L^\infty}\Bigg(\int_\Omega u(-Au)\;\mathrm{d}x\Bigg)^{1/2}
\leq\|b\|_{L^\infty}\|(-A)u\|^{\tfrac{1}{2}}\|u\|^{\tfrac{1}{2}}
    \end{multline*}
for any $u\in D(A)$. In fact, the operator $B$ is also $(-A)^{1/2}$-bounded, since $D((-A)^{1/2}) = H^1_0(\O)\subseteq D(B)$ and, for every $u\in H^1_0(\O)$,

\begin{equation*}
\|Bu\| = \int_\O \|b(x)\cdot \nabla u\|^2 \;\mathrm{d}x \le \|b\|_{L^\infty}^2\int_\O\|\nabla u\|^2 \;\mathrm{d}x = d \|(-A)^{1/2} u\|\, .
\end{equation*}

Moreover, thanks to the Rellich's inclusion, $Be^{tA}$ is a compact operator from $H^1_0(\Omega)$ to $L^2(\Omega)$ for every $t>0$, so both conditions $i)$ and $iii)$ of Theorem \ref{thmanalyticgib} are satisfied.
Therefore, in order to apply Theorem \ref{thmanalyticgib}, we shall verify that $A+B$ generates an asymptotically stable semigroup. 
Owing to the La Salle's principle, this is equivalent to show that the elliptic boundary value problem
\begin{equation} \label{elpt_eq}
\left\lbrace \begin{array}{ll}
\Delta u + b(x)\cdot \nabla u = 0 & \text{in } \Omega\,,\\
u = 0 & \text{on } \Gamma\, ,
\end{array}\right.
\end{equation}
only admits the trivial solution $u\equiv 0$. For this purpose, let $u$ be the solution of~\eqref{elpt_eq}. Multiplying the first equation in \eqref{elpt_eq} by $u$ and integrating by parts, from equation \eqref{poincIneq} we deduce that
$$
\lambda_\Omega \int_\Omega u^2 \;\mathrm{d}x \le \int_\Omega \|\nabla u\|^2 \;\mathrm{d}x = - \frac{1}{2}\int_\Omega \textbf{div} (b) u^2 \;\mathrm{d}x\, ,
$$
so hypothesis \textbf{H1)} ensures that
$$
\int_\Omega (\lambda_\Omega + \frac{1}{2} \textbf{div}(b)) u^2 \;\mathrm{d}x = 0\, .
$$
Thus,
$$
(\lambda_\Omega + \frac{1}{2} \textbf{div}(b)) u^2 = 0\quad \text{for a. e.}\quad x\in \Omega\, .
$$
Owing to assumption \textbf{H2)}, we deduce that $u\equiv 0$ in $U$, that in turn implies $u\equiv 0$ in $\Omega$ by well-known unique continuation property for elliptic operators~\cite{Hormander63}.
Thus, under hypoteses \eqref{Hb-1} on the coefficient function $b$, we can apply Theorem~\ref{thmanalyticgib} to~\eqref{abst_eq} and conclude that the solution of~\eqref{parbolic_eq} decays exponentially in time.

%%%%%%%%%%%%%%%%%%%%%%%%%%%%%%%%%%%%%%%%%%%%%%%%%%%%%%%%%%%%%%%%%%%%%%%%%%%%%%%%%%%%%%%%%%%%%%%%%%
\subsection{A degenerate/singular parabolic problem}\label{degsingsec}

We begin this section by recalling the notion of degenerate diffusion coefficients~\cite[Chapter 1]{Fra-mug}.% as follows.
\begin{Def}[Weakly degenerate case]
A function $a$ is said to be \emph{weakly degenerate} at $x_{0}\in[0,1]$ if 
$a\in C([0,1])\cap C^{1}([0,1]\setminus\{x_{0}\})$, $a(x_{0})=0$, $a>0$ on 
$[0,1]\setminus\{x_{0}\}$ and there exists $K_{a}\in(0,1)$ such that 
\[
 (x-x_{0})a'(x)\le K_{a}\,a(x)\qquad \forall\, x\in[0,1]\setminus\{x_{0}\}.
\]
\end{Def}
\begin{Def}[Strongly degenerate case]
A function $a$ is said to be \emph{strongly degenerate} at $x_{0}\in[0,1]$ if 
$a\in C^{1}([0,1]\setminus\{x_{0}\})\cap W^{1,\infty}(0,1)$, $a(x_{0})=0$, 
$a>0$ on $[0,1]\setminus\{x_{0}\}$ and there exists $K_{a}\in[1,2)$ such that 
\[
 (x-x_{0})a'(x)\le K_{a}\,a(x)\qquad \forall\, x\in[0,1]\setminus\{x_{0}\}.
\]
\end{Def}
In this section, we consider problems with degeneracy at the boundary. For simplicity, we will consider $x_{0}=0$ and the degenerate diffusion coefficient of the form $a(x) = x^\alpha$ for $\alpha\in [0,2)$, which provides a prototypical example of either weakly---$\alpha \in (0,1)$---or strongly---$\alpha\in [1,2)$---degenerate diffusions. %{, whereas the case of interior degeneracy is treated in Section \ref{intdegSection}.}}
%%%%%%%%%%%%%%%%%%%%%%%%%%%%%%%%%%%%%%%%%%%%%%%%%%%%%%%%%%%%%%%%%%%%%%%%%%%%%%%%%%%%%%%%%%%%%%%%%%
We consider the one dimensional degenerate/singular parabolic problem
\begin{equation} \label{full_parbolic_eq}
\left\lbrace \begin{array}{ll}
\partial_t u = (x^\a u_x)_x  + b(x) u_x + \frac{\mu}{x^\gamma} u & \text{in } (0,1)\times (0, +\infty)\,,\\
u(1,t) = 0 & \text{on } (0, +\infty)\,,\\
\left\lbrace \begin{array}{ll}
u(0,t) = 0 & \text{if } 0\le \a < 1 \,,\\
x^\a u_x(0,t) = 0 & \text{if } 1 \leq \a < 2 \,,
\end{array}\right.
& \text{on } (0, +\infty)\,,\\
u(0,\cdot) = u_0(\cdot)\in L^2(0,1)\, ,
\end{array}\right.
\end{equation}
with constants $\mu\ge 0$ and $\gamma >0$, and suitable assumptions on the function $b$ to be specified later.
Given $\a\in [0,2)$, we introduce the Hilbert space
\begin{equation}
H^1_\a (0,1) := \{u\in L^2(0,1)\cap H^1(\varepsilon,1): x^{\a/2}u_x\in L^2(0,1)\}\, ,
\end{equation}
where $H^1(\varepsilon,1)$ is the space of weakly differentiable functions in every open subset $(\varepsilon, 1)$ of $(0,1)$ for all $\varepsilon > 0$, endowed with the scalar product
\begin{equation}
(u,v)_{H^1_\a (0,1)} := \int_0^1 (uv + x^\a u_x v_x) \;\mathrm{d}x\qquad \forall u,\, v\in H^1_\a (0,1)\, .
\end{equation}
For all $u\in H^1_\a (0,1)$, the trace in $x=1$ is clearly well-defined, whereas the trace in $x=0$ makes sense only in the case $0\le \a < 1$~\cite{CanMarVan05}. This leads us to define
\begin{eqnarray}
&& H^1_{\a,0} (0,1) := \{u\in H^1_\a (0,1) : u(0) = u(1) = 0\}\quad \text{if $0\le \a < 1$}\, , \\
\noalign{\medskip}
&& H^1_{\a,0} (0,1) := \{u\in H^1_\a (0,1) : u(1) = 0\}\quad \text{if $1\le \a < 2$}\, .
\end{eqnarray}%{\RG [We shall start the section with this equation, and then introduce the appropriate functional framework to study it, followed by Hardy's inequality.]}
We will need extensively the following Hardy's inequality~\cite{Vancost11}: for every $z\in H^1_{\a,0}(0,1)$,
\begin{equation}\label{Hardyineq}
\lambda(\a)\int_0^1 \frac{z^2}{x^{2-\a}}\;\mathrm{d}x \le \int_0^1 x^\a z_x^2 \;\mathrm{d}x\, ,
\end{equation}
where $\lambda(\a) := (1-\a)^2/4$. 
This implies that there exists $\mu(\a) >0$ such that
\begin{equation}\label{HP}
\mu(\a) \int_0^1 z^2 \;\mathrm{d}x \le \int_0^1 x^\a z_x^2 \;\mathrm{d}x\qquad \forall z\in H^1_{\a,0}(0,1)\, ,
\end{equation}
which in turn ensures the equivalence of the norm
$$
\|u\|_{H^1_{\a,0}(0,1)}^2 := \int_0^1 x^\a u_x^2 \;\mathrm{d}x\qquad \forall z\in H^1_{\a,0}(0,1)\, .
$$
For simplicity, we study the purely degenerate case in Subsection \ref{deg_case} ($\mu = 0$) and the degenerate/singular case ($\mu\neq 0$) in Subsection~\ref{sing_case}.

%%%%%%%%%%%%%%%%%%%%%%%%%%%%%%%%%%%%%%%%%%%%%%%%%%%%%%%%%%%%%%%%%%%%%%%%%%%%%%%%%%%%%%%%%%%%%%%%%%
\subsubsection{The degenerate case}\label{deg_case}
%%%%%%%%%%%%%%%%%%%%%%%%%%%%%%%%%%%%%%%%%%%%%%%%%%%%%%%%%%%%%%%%%%%%%%%%%%%%%%%%%%%%%%%%%%%%%%%%%%

We consider the one dimensional degenerate parabolic problem
\begin{equation} \label{deg_parbolic_eq}
\left\lbrace \begin{array}{ll}
\partial_t u = (x^\a u_x)_x  + b(x) u_x & \text{in } (0,1)\times (0, +\infty)\,,\\[0.5ex]
u(1,t) = 0 & \text{on } (0, +\infty)\,,\\
\left\lbrace \begin{array}{ll}
u(0,t) = 0 & \text{if } 0\le \a < 1 \,,\\
(x^\a u_x)(0,t) = 0 & \text{if } 1 \leq \a < 2 \,,
\end{array}\right.
& \text{on } (0, +\infty)\,,\\
u(0,\cdot) = u_0(\cdot)\in L^2(0,1)\, ,
\end{array}\right.
\end{equation}
where $b:(0,1)\to \R^{+}\cup\{0\}$ is a bounded measurable function of class $C^1$.
We introduce the $1$-D degenerate operator
\begin{equation}
\begin{array}{c}
D(A) := \{u\in H^1_{\a,0} (0,1)\cap H^2(\varepsilon,1) : (x^\a u_x)_x\in L^2(0,1)\}\, , \\
\noalign{\medskip}
A u = (x^\a u_x)_x\qquad \forall u\in D(A)\, .
\end{array}
\end{equation}
We point out that, if $0\le \a < 1$, every $u\in D(A)$ satisfies Dirichlet boundary conditions $u(0) = u(1) = 0$, while, if $1\le \a < 2$, $u\in D(A)$ implies the Neumann-type boundary condition $(x^\a u_x)(0) = 0$ at $x = 0$ and Dirichlet condition $u(1) = 0$ at $x=1$ (see \cite{CanMarVan05} for a detailed description of the domain $D(A)$. We refer to \cite{CanRocVan} for the analysis of the operator $(A,D(A))$ in higher dimension). 
The operator \((A,D(A))\) generates an analytic semigroup \(\left(e^{tA}\right)_{t\ge 0}\), which is exponentially stable, i.e.,
\[
\|e^{tA}\|_{\mathcal{L}} \le e^{-\mu(\alpha)t} \qquad \forall\, t \ge 0,
\]
where \(\mu(\alpha)\) denotes the constant from Hardy's inequality \eqref{HP}. 
We now define the operator $B: H^1_\a (0,1)\to L^2(0,1)$ by
\begin{equation}\label{perB}
B u(x) = b(x) u_x(x)\qquad \forall u\in  H^1_\a (0,1) \, ,\ x\in (0,1)\, ,
\end{equation}
and suppose furthermore that
\begin{equation}\label{Hb0}
\left\{
\begin{array}{ll}
  b' &\geq - 2 \mu(\alpha)\quad \text{a.e. in } (0,1), \\[1ex]
  b' &> - 2 \mu(\alpha)\quad \text{a.e. in some open set } U\subset(0,1).
\end{array}
\right.
\end{equation}
Then $D(A)\subset D((-A)^{1/2}) = H^1_{\a,0}(0,1) \subset D(B)$. Moreover, in order for $B$ to satisfy the $\theta$-subordinate condition with $\theta=1/2$, that is,
$$\int_0^1b^2(x)u^2_x \,\mathrm{d}x\lesssim \int_0^1x^\alpha u^2_x \,\mathrm{d}x=\int_0^1u(-Au)\,\mathrm{d}x\leq\|Au\|\|u\|,\qquad\forall u\in D(A),$$
%$\|Bu\|\le c\|(-A)^{1/2}u\|$ 
we shall assume that
\begin{equation}\label{condonb}
|b(x)|\le cx^{\a/2} \quad\text{ for some constant $c>0$} \,.
\end{equation}
This requires $b$ to be degenerate at the point $x=0$. If $b$ has the particular form $b(x) = x^\beta$ for some positive $\beta\in \R$, condition \eqref{condonb} reduces to $\beta \ge \a /2$. %{\RG [Please double check whether we need the same structure of $b$, i.e., to vanish at zero with a suitable exponent, even if we rely on $\theta$-subordination. In any case, the assumptions on $b$ must be reformulated in a consisten way. As it is, we later refer to~\eqref{condonb} but that assumption does not include the structure of $b$ as well.]}\textcolor{magenta}{ [If we rely on $\theta$ subordinate, then also we require same structure on b, for example for $\theta$-subordination it will look like:
%$$\|Bu\|=\left(\int_0^1b^2(x)u^2_x \,\mathrm{d}x\right)^{1/2}\leq c\left(\int_0^1x^\alpha u^2_x \,\mathrm{d}x\right)^{1/2}=c\left(\int_0^1u(-Au)\,\mathrm{d}x\right)^{1/2}\leq c\|Au\|^{1/2}\|u\|^{1/2}$$ Should we use $\theta$-subordinate or leave it as it is. What do you suggest?]} {\RG [It does not seem to be a real advantage in using one or the other properties here.]}\\
Moreover, $B e^{tA}$ is a compact operator for all $t>0$, thanks to Rellich's embedding and the compactness of the semigroup $\left(e^{tA}\right)_{t\ge 0}$ in $H = L^2(\O)$ for all $t > 0$.
Finally, in order to prove the exponential stability of the semigroup generated by $A+B$ (and thus of the solution to system \eqref{deg_parbolic_eq}) it suffices to check that the semigroup $\left(e^{t(A+B)}\right)_{t\geq 0}$ is strongly stable. The La Salle's principle ensures that this is the case provided that the elliptic boundary value problem
\begin{equation} \label{deg_elliptic_eq}
\left\lbrace \begin{array}{ll}
(x^\a u_x)_x  + b(x) u_x = 0 & \text{in } (0,1) \,,\\[0.5ex]
u(1) = 0 & \\
\left\lbrace \begin{array}{ll}
u(0) = 0 & \text{if } 0\le \a < 1 \,,\\
(x^\a u_x)(0) = 0 & \text{if } 1 \leq \a < 2 \,,
\end{array}\right.
& \\
u(\cdot) = u_0(\cdot)\in L^2(0,1)\, ,
\end{array}\right.
\end{equation}
admits the only solution $u=0$. Indeed, multiplying the first equation by $u$, integrating by parts, and thanks to \eqref{HP}, we have 
\begin{multline}\label{234p}
\mu(\alpha)\int_0^1 u^2 \,\mathrm{d}x \le \int_0^1 x^\alpha u_x^2 \,\mathrm{d}x = \int_0^1 b(x)\,u_x\,u \,\mathrm{d}x  \\ 
= \frac{1}{2}\big( b(1)\,u(1)^2 - b(0)\,u(0)^2 \big) 
   - \frac{1}{2}\int_0^1 b'(x)\,u^2 \,\mathrm{d}x .
\end{multline}
The first boundary term $b(1)u^2(1)$ vanishes due to the imposed boundary condition on $u(1)$ for all $\alpha\in[0,2)$, while the term $b(0)u^2(0)$ certainly vanishes when $0\le \alpha<1$. For $1\le \alpha < 2$, we argue as follows. Notice that, for a.e. $x\in(0,1)$, we have $$u(x)=-\int_x^1 u'(s)\,ds,$$
which, thanks to Cauchy-Schwartz inequality, implies 
$$
|u(x)|^2
\leq \left(\int_x^1 s^{-\alpha}\,ds\right)
     \left(\int_x^1 s^\alpha |u'(s)|^2\,ds\right)
\leq \left(\int_x^1 s^{-\alpha}\,ds\right)\,\|u\|_{H^1_\alpha(0,1)}^2.
$$
Therefore, from \eqref{condonb}, we obtain $$|b(x)|\,|u(x)|^2
\leq c\,x^{\alpha/2}\left(\int_x^1 s^{-\alpha}\,ds\right)\,\|u\|_{H^1_\alpha(0,1)}^2 .$$ Thus
\[
|b(x)|\,|u(x)|^2
\,\leq\,
\begin{cases}
C\,x^{1/2}|\ln x|\,\|u\|_{H^1_\alpha(0,1)}^2 \,\xrightarrow[x\to 0^+]{}\, 0,
& \text{if } \alpha = 1, \\[2ex]
C\,
x^{1-\alpha/2}\left(\frac{x^{\alpha-1}-1}{1-\alpha}\right)\,\|u\|_{H^1_\alpha(0,1)}^2 \,\xrightarrow[x\to 0^+]{}\, 0,
& \text{if } 1<\alpha<2,
\end{cases}
\]
which, ensures that $b(0)u^2(0)$ is equal to zero for $1\le \alpha<2$ as well. Hence, thanks to the conditions~\eqref{Hb0} on the function $b$, the estimate \eqref{234p} implies that $u\equiv 0$ in $U$. Then, by the unique continuation result \cite{Hormander63}, we conclude that $u\equiv 0$ in $(0,1)$. Thus, system~\eqref{deg_parbolic_eq} with coefficient $b(x)$ satisfying~\eqref{Hb0} and~\eqref{condonb} is exponentially stable.

%%%%%%%%%%%%%%%%%%%%%%%%%%%%%%%%%%%%%%%%%%%%%%%%%%%%%%%%%%%%%%%%%%%%%%%%%%%%%%%%%%%%%%%%%%%%%%%%%%
\subsubsection{The degenerate/singular case}\label{sing_case}
%%%%%%%%%%%%%%%%%%%%%%%%%%%%%%%%%%%%%%%%%%%%%%%%%%%%%%%%%%%%%%%%%%%%%%%%%%%%%%%%%%%%%%%%%%%%%%%%%%

%{\RG [Comments for this section:\\[1ex]Apply the main theorem to the operator of the degenerate/singular equation in Vancostenoble 2011.\\[1ex]
%Per la sezione successiva devo capire se il semigruppo degenerato dall'operatore $A$ con degenerazione e potenziale singolare è compatto in $H$ o no! (vedi Vancostenoble '11)Verify whether the main theorem applies to the operator of the degenerate equation with internal degeneracy from Goldstein1-2-Fragnelli-Romanelli.
%), devo chiarire se il generatore è negative o nonpositive, o bisogna di negative per poter avere stabilità esponenziale del semigruppo non perturbato!!)]}
Consider the one dimensional degenerate/singular parabolic problem
\begin{equation} \label{sing_parbolic_eq}
\left\lbrace \begin{array}{ll}
\partial_t u = (x^\a u_x)_x  + b(x) u_x+\frac{\mu}{x^\gamma} u & \text{in } (0,1)\times (0, +\infty)\,,\\[0.5ex]
u(1,t) = 0 & \text{on } (0, +\infty)\,,\\
\left\lbrace \begin{array}{ll}
u(0,t) = 0 & \text{if } 0\le \a < 1 \,,\\
x^\a u_x(0,t) = 0 & \text{if } 1 \leq \a < 2 \,,
\end{array}\right.
& \text{on } (0, +\infty)\,,\\
u(0,\cdot) = u_0(\cdot)\in L^2(0,1)\, ,
\end{array}\right.
\end{equation}
where  $\a \in [0,2)$, $0<\mu$, $0<\gamma<2-\alpha $ and $b:(0,1)\to \R^{+}\cup\{0\}$ is a bounded measurable function of class $C^1$.
In order to recast this equation in an abstract formulation, we define the operator $(A,D(A))$ as
\begin{equation}
\begin{array}{c}
D(A) := \{u\in H^1_{\a,0} (0,1)\cap H^2(\varepsilon,1) : (x^\a u_x)_x+\frac{\mu}{x^\gamma} u\in L^2(0,1)\}\, , \\
\noalign{\medskip}
A u = (x^\a u_x)_x+\frac{\mu}{x^\gamma} u\qquad \forall u\in D(A)\, .
\end{array}
\end{equation}

\noindent
Note that, for $\gamma \leq 0$, the coefficient $\mu/x^\gamma$ is in $L^\infty(0,1)$ and thus the exponential stability of system~\eqref{sing_parbolic_eq} can be easily derived from the purely degenerate case studied in Section~\ref{deg_case}.
%Furthermore, for $\alpha = 1$, we have $\sqrt{x}\, u_x \in L^2(0,1)$ does not imply $\frac{u}{\sqrt{x}} \in L^2(0,1)$. Hence, we consider $0 < \gamma < 2 - \alpha$. {\RG [Revise the argument of the sentence starting from Furthermore.]}

We recall the following improved Hardy-Poincaré's inequality (see \cite{Vancost11}): for every $\a \in [0,2)$, $n >0$ and $\gamma < 2 - \a$ there exists $C_0 = C_0(\a,\gamma,n)>0$ such that, for every $z\in H^1_{\a,0}(0,1)$,
\begin{equation*}\label{impH-P}
n\int_0^1 \frac{z^2}{x^\gamma} \;\mathrm{d}x + \lambda(\a)\int_0^1 \frac{z^2}{x^{2-\a}}\;\mathrm{d}x \le \int_0^1 x^\a z_x^2 \;\mathrm{d}x + C_0 \int_0^1 z^2 \;\mathrm{d}x\, ,
\end{equation*}
where $\lambda(\a) := (1-\a)^2/4$. Thus,
$$
    \frac{\mu}{n}\int_0^1x^{\alpha}u^2_x \;\mathrm{d}x-\mu\int_0^1 \frac{u^2}{x^\gamma} \;\mathrm{d}x \geq -\frac{\mu C_0}{n}\int_0^1u^2\;\mathrm{d}x +\frac{\mu \lambda(\alpha)}{n}\int_0^1\frac{u^2}{x^{2-\alpha}}\;\mathrm{d}x,
    $$
    which together with~\eqref{HP} implies that
\begin{align}\label{ine}
    \int_0^1x^{\alpha}u^2_x \;\mathrm{d}x-\mu\int_0^1 \frac{u^2}{x^\gamma} \;\mathrm{d}x &\geq \left(1- \frac{\mu}{n}\right)\int_0^1x^{\alpha}u^2_x \;\mathrm{d}x-\frac{\mu C_0}{n}\int_0^1u^2\;\mathrm{d}x\nonumber\\
    &\geq \left\{\mu(\alpha)\left(1- \frac{\mu}{n}\right)-\frac{\mu C_0}{n}\right\}\int_0^1u^2\;\mathrm{d}x.
\end{align}
Thus, for \(0<\mu<\frac{n\mu(\alpha)}{C_0+\mu(\alpha)}\), there exists $$\tau(\mu,\alpha, n,\gamma) :=\left\{\mu(\alpha)\left(1- \frac{\mu}{n}\right)-\frac{\mu C_0}{n}\right\}>0 $$ such that $(A,D(A))$ generates an exponentially stable analytic semigroup $\left(e^{tA}\right)_{t\ge 0}$ with
$$
\|e^{tA}\|_{\mathcal{L}} \le e^{-\tau(\mu,\alpha, n,\gamma)t}\quad \forall t\ge 0\, .
$$
Now, we consider the same perturbation \eqref{perB} under the following conditions.
\begin{equation}\label{Hb1}
\left\{
\begin{array}{ll}
  b'(x) &\geq - 2 \tau(\mu,\alpha, n,\gamma)\quad\quad \text{a.e. in } (0,1), \\[1ex]
  b'(x) &> - 2 \tau(\mu,\alpha, n,\gamma)\quad\quad \text{a.e. in some open set } U\subset(0,1)\\[1ex]
  |b(x)| &\leq  c\sqrt{\frac{ \tau(\mu,\alpha, n,\gamma)}{\mu(\alpha)}} x^{\alpha/2} \quad\text{for some constant}\,\,c>0.
\end{array}
\right.
\end{equation}
%{\RG [Why do we need to explicitly write the constant in the third equation, even though we're using $\lesssim$? Similar comment for the estimate in the following equation.]}\textcolor{magenta}{[We need that because we are using \eqref{ine}, I understand that $c$ will be adjusted accordingly but it make more sense to demonstrate in precise way how we are getting below inequality. I think its fine to write like that, as it avoids mentioning extra $c>0$ unnecessarily OR we can just add extra $c$ with $\leq$, instead of $\lesssim$]}
We then observe that \( D(A)\subset D((-A)^{1/2}) = H^1_{\alpha,0}(0,1) \subset D(B) \). In addition, we have  
\begin{multline*}
    \|Bu\|^2
    = \int_0^1 b^2(x)u_x^2\,\mathrm{d}x 
    \;\leq c^2\; \frac{\tau(\mu,\alpha,n,\gamma)}{\mu(\alpha)}\int_0^1 x^{\alpha}u_x^2\,\mathrm{d}x\\
    \leq c^2\; \left(\left(1- \frac{\mu}{n}\right)-\frac{\mu C_0}{n\mu(\alpha)}\right)\int_0^1 x^{\alpha}u_x^2\,\mathrm{d}x
    \lesssim\; \int_0^1 \left(x^{\alpha}u_x^2 - \mu\frac{u^2}{x^{\gamma}}\right)\mathrm{d}x\\
    \lesssim\; \int_0^1 u(-Au)\,\mathrm{d}x
    \lesssim\; \|(-A)u\|\,\|u\|\qquad\forall u\in D(A),
\end{multline*}
where we have used \eqref{HP}, \eqref{ine}, and the Cauchy-Schwarz inequality. Hence, \(B\) is \(1/2\)-subordinate to \(A\).

Furthermore, by virtue of Rellich’s compactness theorem, it is easy to see that \( B e^{tA} \) is a compact operator. By Theorem \ref{thmanalyticgib}, we claim that the semigroup generated by \( A + B \) is exponentially stable, provided the semigroup \( e^{t(A + B)} \) is strongly stable. 
In order to prove strong stability, we apply La Salle's principle again to show that the elliptic boundary value problem
\begin{equation} \label{deg_singelliptic_eq}
\left\lbrace \begin{array}{ll}
(x^\a u_x)_x  + b(x) u_x +\frac{\mu}{x^\gamma} u = 0 & \text{in } (0,1) \,,\\[0.5ex]
u(1) = 0 & \\
\left\lbrace \begin{array}{ll}
u(0) = 0 & \text{if } 0\le \a < 1 \,,\\
(x^\a u_x)(0) = 0 & \text{if } 1 \le \a < 2 \,,
\end{array}\right.
& \\
u(\cdot) = u_0(\cdot)\in L^2(0,1)\, ,
\end{array}\right.
\end{equation}
has only solution $u=0$ as below.
Multiplying the first equation in \eqref{deg_singelliptic_eq} by $u$, integrating by parts, and using \eqref{ine}, we obtain
$$
\tau(\mu,\alpha,n,\gamma)\int_0^1 u^2 \,\mathrm{d}x 
\le \int_0^1 \left( x^\alpha u_x^2 - \mu \frac{u^2}{x^\gamma} \right)\,\mathrm{d}x 
= \int_0^1 -\frac{1}{2} b'(x)u^2 \,\mathrm{d}x,
$$
where the boundary terms are zero by similar arguments as in the purely degenerate case in the previous section. The above estimate, together with the conditions \eqref{Hb1} and the unique continuation result \cite{Hormander63}, implies that $u \equiv 0$ in $(0,1)$. Hence, we conclude that system~\eqref{sing_parbolic_eq} with the coefficient \(b(x)\) satisfying \eqref{Hb1} is exponentially stable.}

\subsection{Coupled plate dynamics}\label{coupledplate}

Inspired by the works \cite{ChenTriggiani1989, Chen-Triggiani1990}, the authors of \cite{Ammari-Shel-Tebou-2021} considered the following coupled abstract elastic system, {in which the operator matrix defining the damping mechanism is degenerate, in the sense that the associated damping matrix is singular of rank one:}
\begin{equation}\label{degcase}
\begin{cases}
y_{tt} + aAy + \gamma A^{\theta}(y_t + z_t) = 0, & t>0,\\[1ex]
z_{tt} + bAz + \gamma A^{\theta}(y_t + z_t) = 0, & t>0,
\end{cases}
\end{equation}
where $A$ is a positive self-adjoint operator on a Hilbert space $H$, and $a,b,\gamma>0$. Using resolvent estimates of the form~\eqref{resolv-cond-gev}, they proved that for $a\neq b$, the semigroup generated by the system \eqref{degcase} is of Gevrey class $\delta>1/(2\theta)$ for $\theta\in(0,1/4]$, and of class $\delta>(1+2\theta)/(3\theta)$ for $\theta\in(1/4,1/2]$, differentiable but not analytic for $\theta\in(1/2,1]$, and exponentially stable for $\theta\in[0,1]$. Extending their analysis to the non-degenerate case, in \cite{Ammari-Shel-Tebou-2023} they studied the following abstract problem:
\begin{equation}\label{nondegcase}
\begin{cases}
y_{tt} + A_1 y + \alpha B_1 y_t + \beta B_1 z_t = 0, & t>0,\\[1ex]
z_{tt} + A_2 z + \beta B_1 y_t + \gamma B_2 z_t = 0, & t>0,
\end{cases}
\end{equation}
where $\alpha,\beta,\gamma>0$ and $A_j,B_j$ $(j=1,2)$ are positive self-adjoint operators such that for some  constants $\alpha_0,\alpha_1,\alpha_2,\beta_1,\beta_2>0$ and $\mu,\theta\in(0,1]$, we have 
\[
B_1 \le \alpha_0 B_2, \quad \beta_1 A_2^{\theta} \le B_2 \le \beta_2 A_2^{\theta}, \quad \beta_1 A_2^{\theta} \le B_2 \le \beta_2 A_2^{\theta}.
\]
In this case, the corresponding semigroup is analytic whenever $1/2 \le \mu,\theta \le 1$, and of Gevrey class $\delta\geq 1/(2\min(\mu,\theta))$ when $\min(\mu,\theta)\in(0,1/2)$. However, exponential stability is not discussed.

Motivated by these problems, we consider a coupled plate system with different fractional damping orders, where the fractional power in the self-damping term is higher than that in the cross-damping term. Precisely, we study
\begin{equation}\label{plateeq}
    \left\{
\begin{array}{ll}
\partial_t^2 u +\Delta^2 u + \Delta^{2\alpha} \partial_t u + \beta \Delta^{2\theta} \partial_t v = 0, & \text{in } \Omega \times (0,\infty), \\[1ex]
\partial_t^2 v +\Delta^2 v  + \Delta^{2\alpha} \partial_t v +\beta \Delta^{2\theta} \partial_t u = 0, & \text{in } \Omega \times (0,\infty), \\[1ex]
u = 0, \, \partial_\textbf{n}u=0,\, v = 0,\, \partial_\textbf{n}v=0, & \text{on } \Gamma \times (0,\infty),
\end{array}
\right.
\end{equation}
%{\RG [Can~\eqref{plateeq} be rephrased as either~\eqref{degcase} or~\eqref{nondegcase}? We shall make clearer the connection between this system and the previous ones.]}
where $0 < \alpha \le 1$, $0 < \theta <1/2$, $\theta<\alpha$, $\beta \in \mathbb{R}\backslash\{0\}$, and $\partial_\textbf{n}$ denotes the unit normal derivative on $\Gamma$. {In contrast with the degenerate model \eqref{degcase}, where the damping and coupling are governed by the same operator, and the non-degenerate model
\eqref{nondegcase}, in which the matrix governing the damping and coupling mechanism is not symmetric, the present system exhibits a symmetric fractional
damping structure with different fractional orders \((\theta<\alpha)\). Note that if \(\theta=\alpha\) is chosen, then system \eqref{plateeq} becomes a
particular case of system \eqref{nondegcase} with \(\alpha=\gamma=1\),
\(A_1=A_2=\Delta^2\), and $B_1=B_2=\Delta^{2\alpha}$, and it also corresponds
to system~\eqref{degcase} when \(a=b=\gamma=\beta=1\) and \(A=\Delta^2\).} %{\RG [Clarify the previous sentence.]} We demonstrate strong stability, and prove that the system generates an exponentially stable semigroup on the associated energy space.

Define $U := (u, u_t, v, v_t)^\top$ %{\RG [prefer $z^\top$ rather than $z^T$ for the transpose.]} 
and the space $\mathcal{H}
=
\left(\big[H^4(\Omega)\cap H^2_0(\Omega)\big] \times L^2(\Omega)\right)^2,$ on $\C$, equipped with the norm
\[
\|U\|^2_{\mathcal{H}}
=\|\Delta u\|^2+
\|u_t\|^2+
\|\Delta v\|^2+\|v_t\|^2.\]
%{\RG [Is $\|\,\cdot\,\|$ in the RHS the $L^2$-norm? We shall add this notation at the beginning of the Applications section, if that's used throughout the section.\\We shall also introduce the notations $\partial_t u$ and $\partial_t^2 u$ for time derivatives of the function $u$, as well as $u_t$ and $u_{tt}$.]\\}
Then system \eqref{plateeq} can be written as
\[
\partial_t U = (A  + B) U,
\]where
\[
A
=
\begin{pmatrix}\,\,\boxed{
\begin{matrix}
0 & I  \\
-\Delta^2 & - \Delta^{2\alpha}
\end{matrix}
}
& 0 \\
0 &
\boxed{
\begin{matrix}
0 & I \\
-\Delta^2 & - \Delta^{2\alpha}
\end{matrix}
}\,\,\end{pmatrix},\]\[
B
=
\begin{pmatrix}
0 &
\boxed{
\begin{matrix}
0 & 0 \\
0 & - \beta \Delta^{2\theta}
\end{matrix}
}\,\,
\\\,\,
\boxed{
\begin{matrix}
0 & 0 \\
0 & - \beta \Delta^{2\theta}
\end{matrix}
}
& 0
\end{pmatrix},
\]
and
\[
D(A)
=
\left\{
U  \in \big[ H^4(\Omega) \cap H^2_0(\Omega)\big]^4
:
\begin{array}{l}
\Delta^2 u + \Delta^{2\alpha} u_t \in L^2(\Omega),\\
\Delta^2 v + \Delta^{2\alpha} v_t \in L^2(\Omega).
\end{array}
\right\},
\]
\[
D(B)
=
\left\{
U \in \mathcal{H}
:
u_t \in D(\Delta^{2\theta}), \;
v_t \in D(\Delta^{2\theta})
\right\}.
\]
Clearly, {$D(B) \supset D(A)$}, %{\RG [Here actually we state that $D(A)$ and $D(B)$ are related! Shouldn't we notice this right after introducing the operators $A$ and $B$ with their domains?]}
and given $\theta \in (0,1/2)$, we have
\begin{align*}
\|BU\|^2_{\mathcal{H}}
&=|\beta|^2\big[\|\Delta^{2\theta} u_t\|^2 + \|\Delta^{2\theta} v_t\|^2\big] \\
&\lesssim |\beta|^2\big[\|\Delta u_t\|^{4\theta} \|u_t\|^{2(1-2\theta)} + \|\Delta v_t\|^{4\theta} \|v_t\|^{2(1-2\theta)}\big] \\
&\lesssim |\beta|^2 \big[ \|\Delta u_t\|^{4\theta} + \|\Delta v_t\|^{4\theta} \big] \|U\|^{2(1-2\theta)}_{\mathcal{H}} \\
&\lesssim |\beta|^2 \big[ \|\Delta u_t\|^{2} + \|\Delta v_t\|^{2} \big]^{2\theta} \|U\|^{2(1-2\theta)}_{\mathcal{H}} \\
&\lesssim |\beta|^2 \|A U\|^{4\theta}_{\mathcal{H}} \|U\|^{2(1-2\theta)}_{\mathcal{H}},
\end{align*}
where we have used the interpolation inequality and Jensen's inequality (for concave functions). Hence, $B$ is a $2\theta$-subordinate perturbation of $A$.

Note that, using the Balakrishnan representation formula for the fractional power of the positive operators (see \cite[Chapter~4]{Lunardi}  or \cite[Section~2.6]{Pazy}), {for $\theta<\alpha$}, we have the following spectral estimate:
\begin{equation}\label{fracPoinSpectrInq}
    \|(-\Delta)^\theta x\|\leq \lambda_{\Omega}^{\theta-\alpha}\|(-\Delta)^\alpha x\|, \,\,\qquad \forall x\in D\bigl((-\Delta)^\alpha)\bigr),
\end{equation}
%{\RG [for which values of $\theta$ and $\alpha$? The same intervals mentioned after~\eqref{plateeq}?\\By the way, too many numbered equations. Make sure that only equations effectively referred to in the text are numbered, and the other can be unnumbered. Since we will end up with many numbered equations anyway, please check how to number them according to the section they are in, e.g., equation (3.10) rather than equation (51).]\\}
where $\lambda_{\Omega}$ is the Poincaré constant, i.e., the first Dirichlet eigenvalue of $-\Delta$, as in \eqref{poincIneq}. In the sequel, we restrict to $|\beta| < \lambda_{\Omega}^{2(\alpha-\theta)}$. Then, it is easy to see that
\begin{multline}\label{dissipa}
\Re\langle(A+B)U,U\rangle_{\mathcal{H}}
= -\|(-\Delta)^\alpha u_t\|^2 - \|(-\Delta)^\alpha v_t\|^2
- 2\beta\,\Re\langle(-\Delta)^\theta u_t,\,(-\Delta)^\theta v_t\rangle\\
\le -\|(-\Delta)^\alpha u_t\|^2 - \|(-\Delta)^\alpha v_t\|^2
+ 2|\beta|\,\|(-\Delta)^\theta u_t\|\,\|(-\Delta)^\theta v_t\|\\
\le -\|(-\Delta)^\alpha u_t\|^2 - \|(-\Delta)^\alpha v_t\|^2
+ |\beta|\bigl(\|(-\Delta)^\theta u_t\|^2+\|(-\Delta)^\theta v_t\|^2\bigr)\\
\le -\bigl(1-|\beta|\lambda_{\Omega}^{2(\theta-\alpha)}\bigr)
\bigl[\|(-\Delta)^\alpha u_t\|^2+\|(-\Delta)^\alpha v_t\|^2\bigr]\le 0,
\end{multline}%{\RG [Math comment: here you are skipping some details to get the first identity. I agree with that, I imagine that's straightforward. But I haven't double checked the computations, please do so one more time.\\Notation comment: Have you introduced, here or earlier in the text, the notation for the real part of a complex number?\\Latex comment: I usually prefer \{multline*\} rather than \{align*\} environments. \{multline*\} has a less rigid structure, which would fit the third inequality in one line rather than splitting it in two lines.\\Other tex comment: make sure to space out terms like $|\beta|\|(-\Delta)^\theta u_t\|^2$ with a space \verb|\,| in between. In this case, $|\beta|\,\|(-\Delta)^\theta u_t\|^2$.]\\}
where we applied the Cauchy-Schwarz inequality, Young's inequality, and estimate \eqref{fracPoinSpectrInq}. By a density argument, % {\RG [density of which space in which other one and with respect to which norm?]}, 
$A+B$ is closed in $\mathcal{H}$ and hence, by the Lumer-Phillips theorem,  it generates a strongly continuous semigroup of contractions on $\mathcal{H}$. For $\Phi:=(\phi_1,\phi_2,\phi_3,\phi_4)\in\mathcal{H}$, we consider the resolvent problem $i\lambda U-(A+B)U=\Phi\,,\lambda\in\R,$ which reads as follows:
 %{\RG[What's the relation between $D(A)$ and $D(B)$? To which space does $U$ belong? Specify also that $\Phi \in\mathcal{H}$.]\\}
\begin{align*}
    i\lambda u-u_t=\phi_1 &\in H^4(\Omega)\cap H^2_0(\Omega)\\
    i\lambda u_t+\Delta^2u+\Delta^{2\alpha}u_t+\beta\Delta^{2\theta}v_t=\phi_2&\in L^2(\Omega)\\
    i\lambda v-v_t = \phi_3&\in H^4(\Omega)\cap H^2_0(\Omega)\\
    i\lambda v_t+\beta\Delta^{2\theta}u_t+\Delta^2v+\Delta^{2\alpha}v_t=\phi_4&\in L^2(\Omega). 
\end{align*}
Using \eqref{dissipa} in the above resolvent equation, gives %{\RG [Clarify what this "Above" refers to.]}  
\begin{equation}\label{resol}
    \bigl(1-|\beta|\lambda_{\Omega}^{2(\theta-\alpha)}\bigr)\bigl[\|(-\Delta)^\alpha u_t\|^2+\|(-\Delta)^\alpha v_t\|^2\bigr] \le \Re\langle \Phi,U\rangle_{\mathcal{H}}.
\end{equation}
%{\RG [Equation~\eqref{dissipa} is an inequality. How come do we get here an identity?]\\}
 Observe that $0\in\rho(A+B)$. %{\RG [Add this notation of resolvent set to the beginning of the section, unless it is used only here, or here for the first time?]} 
 This can be verified by solving the stationary equation:  
$$
\begin{cases}
u_t=\phi_1,\\
\Delta^2u+\Delta^{2\alpha}u_t+\beta\Delta^{2\theta}v_t=\phi_2,
\end{cases}
\qquad
\begin{cases}
v_t=\phi_3,\\
\Delta^2v+\Delta^{2\alpha}v_t+\beta\Delta^{2\theta}u_t=\phi_4,
\end{cases}
$$
which implies  
\begin{equation*}
\Delta^2u=\phi_2-\Delta^{2\alpha}\phi_1-\beta\Delta^{2\theta}\phi_3,\qquad 
\Delta^2v=\phi_4-\Delta^{2\alpha}\phi_3-\beta\Delta^{2\theta}\phi_1.
\end{equation*}
This can be expressed in the variational formulation  
\[
\mathscr{B}\big((u,v),(w,z)\big)=h((w,z)), \qquad \forall\,(w,z)\in V:=H_0^2(\Omega)\times H_0^2(\Omega),
\]
%{\RG [Also in this section, we use $b$ to denote multiple things, since it was already used in~\eqref{degcase} as a constant. Please use a different notation for this sesquilinear form.]}
where the sesquilinear form $\mathscr{B}:V\times V\to\mathbb C$ is defined by  
\begin{equation*}
\mathscr{B}\big((u,v),(w,z)\big):=\langle \Delta u,\Delta w\rangle+\langle \Delta v,\Delta z\rangle,
\end{equation*}
which is coercive, since  
\[
\mathscr{B}\big((u,v),(u,v)\big)=\|\Delta u\|+\|\Delta v\|\gtrsim\|(u,v)\|_{V}^2.
\]
%{\RG [Here you are specifying the $L^2$ in the norm $\|\,\cdot\,\|_{L^2}$, before you were using only $\|\,\cdot\,\|$. Make the notation consistent. Besides, make sure to introduce the notation $\gtrsim$ at the beginning of the section or whenever it appears for the first time, if that is used seldom in the paper.]\\}
The right-hand side functional is given by  
\[
h((w,z)) := \big\langle (\phi_2-\Delta^{2\alpha}\phi_1-\beta\Delta^{2\theta}\phi_3,\, \phi_4-\Delta^{2\alpha}\phi_3-\beta\Delta^{2\theta}\phi_1),\,(w,z)\big\rangle,
\]
which is continuous on $V$. Therefore, by the Lax-Milgram Theorem, there exists a unique solution $U\in D(A)$. %{\RG [From here it looks like $D(A)$ and $D(B)$ are not related and so $U\in D(A+B)$.]}
Moreover, it is easy to see that the estimate $\|U\|_{\mathcal H}\lesssim \|\Phi\|_{\mathcal H}$ holds, which shows that $0\in\rho(A+B)$. With this in mind, we next claim that the semigroup generated by $A+B$ is strongly stable.

To this aim, we rely on the following characterization of strong stability of a semigroup~\cite{ArendtBatty1988}: the semigroup $(T(t))_{t\ge 0}$ generated by $\tilde{A}$ is strongly stable if and only if $i\R\subset \rho(\tilde{A})$. We argue by contradiction, and assume that $i\R\not\subset\rho(A+B)$.  
Let $\{\lambda_j\}_{j\in\N}\subset\Lambda :=\{h\in\R^{+}: (-ih,ih)\subset\rho(A+B)\}$ such that $\lambda_j\rightarrow \text{sup}(\Lambda)<+\infty$. That is,
$$
\lim_{j\rightarrow\infty}\|(i\lambda_jI-(A+B))^{-1}\|_{\mathcal{L}}=\infty,
$$
which means that there exists a bounded sequence $\{\Phi_j\}_j\subset\mathcal{H}$ such that
$$
\lim_{j\rightarrow\infty}\|(i\lambda_jI-(A+B))^{-1}\Phi_j\|_{\mathcal{H}}=\infty.
$$
Setting $\hat U_j:=(\hat u_j,\hat {\textbf{u}}_{j},\hat v_j,\hat {\textbf{v}}_{j})^T,$ 
$$
\hat{U}_j:=\frac{(i\lambda_jI-(A+B))^{-1}\Phi_j}{\|(i\lambda_jI-(A+B))^{-1}\Phi_j\|_{\mathcal{H}}}, \qquad
\hat\Phi_j:=\frac{\Phi_j}{\|(i\lambda_jI-(A+B))^{-1}\Phi_j\|_{\mathcal{H}}},
$$
we see that, 
%{\RG [Elaborate further why the second limit holds true. Is $\{\Phi_j\}_j\subset\mathcal{H}$ bounded?]}
$$\|\hat U_j\|_{\mathcal{H}} = 1,\qquad
i\lambda_j\hat U_j-(A+B)\hat U_j=\hat \Phi_j \qquad \text{and} \qquad \lim_{j\to\infty}\|\hat\Phi_j\|_{\mathcal{H}}=0.
$$
From \eqref{resol}, we have 
\begin{multline*}
\lim_{j\to\infty}
\bigl(1-|\beta|\lambda_{\Omega}^{2(\theta-\alpha)}\bigr)
\Bigl[\|(-\Delta)^\alpha \hat {\textbf{u}}_{j}\|^2
+\|(-\Delta)^\alpha \hat {\textbf{v}}_{j}\|^2\Bigr]
\le \lim_{j\to\infty}\Re\langle\hat \Phi_j,\hat U_j\rangle_{\mathcal{H}}\\
\le \lim_{j\to\infty}\|\hat \Phi_j\|_{\mathcal{H}}\|\hat U_j\|_{\mathcal{H}}
= 0.
\end{multline*}
%{\RG [The limit should be just equal to zero, why $\le$?]}
{which  implies $(-\Delta)^\alpha \hat {\textbf{u}}_{j},\, (-\Delta)^\alpha \hat {\textbf{v}}_{j}\to 0$ as $j\rightarrow\infty.$ Since $\|\hat{\mathbf u}_j\| \leq \lambda_\Omega^{-\alpha}\|(-\Delta)^\alpha \hat{\mathbf u}_j\|$ and $\|\hat{\mathbf v}_j\| \leq \lambda_\Omega^{-\alpha}\|(-\Delta)^\alpha \hat{\mathbf v}_j\|$, 
% \begin{align*}
%     \|\hat{\mathbf u}_j\|
% \leq \lambda_\Omega^{-\alpha}\|(-\Delta)^\alpha \hat{\mathbf u}_j\|,
% \\
% \|\hat{\mathbf v}_j\|
% \leq \lambda_\Omega^{-\alpha}\|(-\Delta)^\alpha \hat{\mathbf v}_j\|.
% \end{align*}
we deduce that $\hat{\mathbf u}_j, \hat{\mathbf v}_j \to 0$ as $j\to\infty$.} Also, since each component of $\hat{\Phi}_j$ tends to zero, it follows from the relation 
$i\lambda_j \hat{U}_j - (A+B)\hat{U}_j = \hat{\Phi}_j$ that 
$\hat{u}_j, \hat{v}_j \to 0$ as well. Consequently, $\hat{U}_j \to 0$ as $j \to \infty.$
 %{\RG [Why so? Make sure to provide justifications or references for all statements]}. 
Furthermore,  the resolvent bound
$$
\|(A+B)\hat U_j\|_{\mathcal{H}}\le\|\hat \Phi_j\|_{\mathcal{H}}+\lambda_j\|\hat U_j\|_{\mathcal{H}}\le C,
$$ implies \begin{align*}
   \hat {\textbf{u}}_{j}\quad\text{is bounded in} &\quad H^4(\Omega)\cap H^2_0(\Omega)\\
    -\Delta^2\hat u_j-\Delta^{2\alpha}\hat {\textbf{u}}_{j}-\beta\Delta^{2\theta}\hat {\textbf{v}}_{j}\quad\text{is bounded in} &\quad L^2(\Omega)\\
    \hat {\textbf{v}}_{j} \quad\text{is bounded in} &\quad H^4(\Omega)\cap H^2_0(\Omega)\\
    -\beta\Delta^{2\theta}\hat {\textbf{u}}_{j}-\Delta^2\hat v_j-\Delta^{2\alpha}\hat {\textbf{v}}_{j}\quad\text{is bounded in} &\quad L^2(\Omega),  
\end{align*}
which gives the existence of a subsequence $\hat U_{j_k}$ strongly converging to $U\in\mathcal{H}$ with $\|U\|_{\mathcal{H}}=1$. Since $A+B$ is closed, we have $i\lambda U-(A+B)U=0,$ which implies $U=0$, a contradiction. Thus, it follows that $i\mathbb{R} \subset \rho(A+B)$, which means $\left(e^{t(A+B)}\right)_{t\geq 0}$ is strongly stable. 

From \cite{ChenTriggiani1989,Chen-Triggiani1990}, we know that $A$ generates an analytic semigroup for $\alpha \in [1/2,1]$, whereas it generates a semigroup of Gevrey class $\delta>1/(2\alpha)$ for $\alpha \in (0,1/2)$. Moreover, $e^{At}$ is exponentially stable for any $\alpha\in (0,1]$ \cite{Chen-Triggiani1990}. By the Rellich-Kondrachov compact embedding theorem, the natural embedding
$$
H^{4}(\Omega)\,\cap\,H^{2}_{0}(\Omega)
   \,\to\,
   H^{4-4\theta}(\Omega)\hookrightarrow L^2(\Omega),
   \qquad 0<\theta<1/2,
   $$
   is compact.  Hence, for \(t>0\), the operator \(B\,e^{A t}\) is compact on \(\mathcal H\). As a consequence, we can state the following stability results:
\begin{itemize}
    \item For $\alpha \in \left[\frac{1}{2},1\right]$, $\theta \in \left(0,\frac{1}{2}\right)$, and $|\beta| < \lambda_{\Omega}^{2(\alpha-\theta)}$, exponential stability of the semigroup generated by $A + B$ follows from Theorem \ref{thmanalyticgib}.
    \item For $0 < \theta < \alpha < \frac{1}{2}$ and $|\beta| < \lambda_{\Omega}^{2(\alpha-\theta)}$, exponential stability of the semigroup generated by $A + B$ follows from Corollary \ref{gev-theta-sub}.
\end{itemize}

Therefore, if $|\beta| < \lambda_{\Omega}^{2(\alpha-\theta)}$, the solution of system \eqref{plateeq} decays exponentially in time, either for
$0<\theta<\alpha<\tfrac12$ or for $0<\theta<\tfrac12 \leq \alpha \leq 1$.
%{\RG [Here we shall refer to the stability of the solution to~\eqref{plateeq} (and can link that to the stability of the semigroup generated by $A+B$).]}
\subsection{Generalized coupled system of the
Kirchhoﬀ-Love Plates and Membrane-Like Electric Network}\label{couplekirchhof-elect}

We start by introducing the following coupled system formed by a Kirchhoff--Love plate equation and a membrane-like electric network~\cite{Suarez-Mendes-2025}:
\begin{equation}\label{suarezsys}
    \left\{\begin{aligned}
u_{tt} + \alpha A^2 u + \gamma A v_t &= 0, && x\in\Omega,\; t>0,\\[1ex]
v_{tt} + \beta A v - \gamma A u_t + \delta A^{\theta} v_t &= 0, && x\in\Omega,\; t>0,
\end{aligned}\right.
\end{equation} where $A: D(A)\subset L^{2}(\Omega)\to L^{2}(\Omega)$, $A=-\Delta$, and $D(A)=H^{2}(\Omega)\cap H^{1}_{0}(\Omega)$. 
In this model, only the electrical component is dissipative, with damping of fractional order $\theta\in(0,1]$. 
It is proven in~\cite{Suarez-Mendes-2025} that the associated semigroup is not analytic
for $\theta\in[0,1)$, is of Gevrey class of order $\delta>1/\theta$ for
$\theta\in(0,1)$, but becomes analytic for $\theta=1$, and is exponentially stable
for all $\theta\in[0,1]$.

 %{\RG [What's the regularity for $\theta = 1$?]}
%{\RG [Even for $\theta = 0$?]}\textcolor{magenta}{[Yes]}

We now consider the generalized system formed by a Kirchhoff-Love plate with a fractional dissipative term $\delta_1(-\Delta)^{\theta_1}\partial_t u,$ where $\theta_1\in(0,1)$ and a membrane-like electric network with fractional dissipation given by $\delta_2 (-\Delta)^{\theta_2}\partial_t v,$ where $\theta_2\in(0,1]$, as
\begin{equation}\label{kir-lov-elect-eq}
\left\{
\begin{aligned}
\partial_{t}^2u + \alpha \Delta^2 u - \gamma \Delta (\partial_t v) + \delta_1(-\Delta)^{\theta_1}\partial_t u&= 0 \,\,\,\,\text{in } \Omega \times (0, \infty), \\
\partial_{t}^2v - \beta \Delta v + \gamma \Delta (\partial_t u) + \delta_2 (-\Delta)^{\theta_2}\partial_t v &= 0 \,\,\,\,\text{in } \Omega \times (0, \infty), \\
u = \Delta u = 0,\quad v &= 0 \,\,\,\,\text{on } \Gamma \times (0, \infty),
\end{aligned}
\right.
\end{equation}
%{\RG [We shall be consistent in using $u_t$ and $u_{tt}$ in place of $\partial_t u$ and $\partial^2_{tt} u$. Or did we clarify that we used one or the other notation interchangeably?]} \textcolor{magenta}{[we already mentioned that in the notations]}
where $\alpha, \beta, , \delta_1, \delta_2 >0$ and $\gamma \geq 0$. We show that such additional Kelvin-Voigt-type damping in the plate is, in fact, relatively bounded with respect to the system \eqref{suarezsys}, and hence, by applying the results obtained in Section~\ref{sec:main}, we establish the exponential stability of this system. For physical significance of these models, we refer the interested reader to~\cite{Alessandroni-DellIsola-Porfiri-2002, Vidoli-DellIsola-2001}.

We define $U := (u, u_t, v, v_t)^\top$ and the complex Hilbert space $\mathcal{H}:= [H^2(\Omega)\cap H^1_0(\Omega)]\times L^2(\Omega)\times H^1_0(\Omega)\times L^2(\Omega)$, with norm $$\|U\|^2_{\mathcal{H}}=\alpha\|\Delta u\|^2+\|u_t\|^2+\beta\|(-\Delta)^{1/2}v\|^2+\|v_t\|^2.$$ Then, system~\eqref{kir-lov-elect-eq} can be represented in the abstract form
$$
\partial_t U= (A+B)U,
$$ where 
$$
AU :=\Bigl(u_t, -\alpha \Delta^2 u +\gamma \Delta  v_t, v_t, \beta \Delta v-\gamma\Delta u_t- \delta_2(-\Delta)^{\theta_2} v_t\Bigr),
$$ 
$$BU := \Bigl(0,- \delta_1(-\Delta)^{\theta_1}u_t,0,0\Bigr), $$ with respective domains
$$
\begin{aligned}
D(A) := \Bigl\{\, U \in \mathcal{H} \ : \ 
&\big( u_t, \ \alpha \Delta u - \gamma v_t, \ -\beta v - \delta_2 (-\Delta)^{\theta_2-1} v_t \big) 
    \in \big[ H^2(\Omega) \cap H^1_0(\Omega) \big]^3, \\
& v_t \in H^1_0(\Omega) \Bigr\},
\end{aligned}
$$
$$D(B):= \big\{U\in \mathcal{H} : u_t\in D\big((-\Delta)^{\theta_1}\big)\big\}.$$ Further, $B$ is ${\theta_1}$-subordinate to $A$. Indeed, $D(B)\supset D(A),$ and 
\begin{align*}
\|BU\|^2_{\mathcal{H}}
&=\|\delta_1(-\Delta)^{\theta_1} u_t\|^2  \\
&\lesssim  \|(-\Delta) u_t\|^{2{\theta_1}}\|u_t\|^{2(1-{\theta_1})}
%\\ &
\lesssim  \|A U\|^{2{\theta_1}}_{\mathcal{H}} \|U\|^{2(1-{\theta_1})}_{\mathcal{H}},
\end{align*}
where we have used the interpolation inequality and the definition of the norm. In \cite{Suarez-Mendes-2025}, it is proved that the semigroup generated by $A$ is exponentially stable for $\theta_2\in[0,1]$, is of Gevrey class $s > 1/\theta_2 $ for $\theta_2\in(0,1)$ and analytic for $\theta_2=1$. Therefore, applying Corollary \ref{gev-theta-sub} and Theorem \ref{thmanalyticgib}, we can say that the semigroup generated by $A+B$ is exponentially stable, provided that it is strongly stable. 

Note that $D(A)$ is dense in $\mathcal{H}$ and, for any $U\in D(A)$, we have
\begin{align*}
    \Re\langle (A+B)U,U\rangle = -\delta_1\|(-\Delta)^{\theta_1/2}u_t\|^2-\delta_2\|(-\Delta)^{\theta_2/2}v_t\|^2 <0.
\end{align*}
Thus, $A+B$ is dissipative. We now check that $0\in \rho(A+B).$ For $\Phi:=(\phi_1,\phi_2,\phi_3,\phi_4)$, consider the problem $(A+B)U=\Phi$, % i.e., the resolvent system $(i\lambda-(A+B))U=\Phi$ with $\lambda =0$, yields  
which yields
\begin{equation}
\left\{
\begin{aligned}
 u_t &= \phi_1,\\
 v_t &= \phi_3,\\
 \alpha\,\Delta^2 u &= \gamma\,\Delta \phi_3 - \phi_2 - \delta_1\,(-\Delta)^{\theta_1} \phi_1,\\
 \beta\,\Delta v &= \gamma\,\Delta \phi_1 + \delta_2\,(-\Delta)^{\theta_2} \phi_3 + \phi_4.
\end{aligned}
\right.
\label{eqs}
\end{equation}  
Define the set $Z = H^2(\Omega)\,\cap \,H^1_0(\Omega)\,\times\, H^1_0(\Omega)$ and the sesquilinear form $\mathscr{B}((u,v);$ $(w,z)) : Z\times Z\rightarrow \C$ by
$$\mathscr{B}((u,v);(w,z)) := \alpha\langle \Delta u, \Delta w\rangle + \beta\langle(-\Delta)^{1/2}v,(-\Delta)^{1/2}z \rangle ,$$  
which is coercive, since
$$\mathscr{B}((u,v);(u,v)) = \alpha\|\Delta u\|^2+\beta\|(-\Delta)^{1/2}v\|^2\gtrsim \|(u,v)\|^2_{Z}.$$  
Define the linear functional $h((w,z))$ on $Z$ as %{\RG [Specify the domain of $h$.]} 
$$h((w,z)) := \big\langle( \gamma\,\Delta \phi_3 - \phi_2 - \delta_1\,(-\Delta)^{\theta_1} \phi_1,\; \gamma\,\Delta \phi_1 + \delta_2\,(-\Delta)^{\theta_2} \phi_3 + \phi_4), (w,z)\big\rangle ,$$  
which allows us to write \eqref{eqs} in variational form:  
$$\mathscr{B}((u,v);(w,z)) = h((w,z))\quad\forall\,(w,z)\in Z.$$  
Due to the Lax-Milgram theorem, there exists a unique solution $U\in\mathcal{H}$ of $(A+B)U=\Phi$ satisfying \eqref{eqs} weakly and hence $U\in D(A+B).$ Furthermore, taking the inner product with $u$ and $v$ in the third and fourth equations of \eqref{eqs}, respectively, and applying the Cauchy-Schwarz and Young inequalities, one can easily show that $\|U\|_{\mathcal{H}}\lesssim\|\Phi\|_{\mathcal{H}}.$ Hence, $0\in \rho(A+B).$ Next, we prove strong stability of the semigroup generated by $A+B$ below.  

Return to the resolvent problem $(i\lambda-(A+B))U=\Phi,\, \lambda\in\R, \, \Phi\in\mathcal{H}$, which is given by
\begin{align*}
    i\lambda u-u_t &=\phi_1, \\
    i\lambda u_t+\alpha \Delta^2 u -\gamma \Delta  v_t+ \delta_1(-\Delta)^{\theta_1} u_t &=\phi_2,\\
    i\lambda v-v_t &= \phi_3,\\
    i\lambda v_t-\beta \Delta v+\gamma\Delta u_t+ \delta_2(-\Delta)^{\theta_2} v_t &=\phi_4, 
\end{align*}
to see that 
\begin{equation}\label{45}
    \delta_1\|(-\Delta)^{\theta_1/2}u_t\|^2+ \delta_2\|(-\Delta)^{\theta_2/2}v_t\|^2 = \Re\langle\Phi, U\rangle \leq \|\Phi\|_{\mathcal{H}}\|U\|_{\mathcal{H}}.
\end{equation}
Following a similar argument by contradiction as in the previous section, we assume $\{\lambda_j\}_{j\in\N}\subset\Lambda :=\{h\in\R^{+}: (-ih,ih)\subset\rho(A+B)\}$ such that $\lambda_j\rightarrow \text{sup}(\Lambda)<+\infty$, which implies that there exists $\{\Phi_j\}_{j\in\N}\subset\mathcal{H}$ such that
$$
\lim_{j\rightarrow\infty}\|(i\lambda_jI-(A+B))^{-1}\Phi_j\|_{\mathcal{H}}=\infty.$$
From \eqref{45} and using straightforward arguments as in Section \ref{coupledplate}, we arrive at a contradiction, and thus, $i\R\subset\rho(A+B),$
obtaining strong stability of $\left(e^{t(A+B)}\right)_{t\geq 0}$ by Arendt-Batty criterion \cite{ArendtBatty1988}. Therefore, we can summarize the findings as follows:
\begin{itemize}
    \item For $0<\theta_1<1$ and $\theta_2=1$, $\left(e^{t(A+B)}\right)_{t\geq 0}$ is exponentially stable by Theorem \ref{thmanalyticgib}.
    \item For $0<\theta_1< \theta_2$ and $0<\theta_2<1$, $\left(e^{t(A+B)}\right)_{t\geq 0}$ is exponentially stable by Corollary \ref{gev-theta-sub}.
\end{itemize}
Hence, the solution of the generalized coupled system \eqref{kir-lov-elect-eq}, of the Kirchhoff-Love plates and a membrane-like electric network, decays exponentially for $0<\theta_1<\theta_2\leq 1$.

\appendix
%%%%%%%%%%%%%%%%%%%%%%%%%%%%%%%%%%%%%%%%%%%%%%%%%%%%%%%%%%%%%%%%%%%%
%%%%%%%%%%%%%%%%%%%%%%%%%%%%%%%%%%%%%%%%%%%%%%%%%%%%%%%%%%%%%%%%%%%%

%{\RG [Double check capitalized letters in the references, for example in [28].]}

\bibliography{biblio3}
\bibliographystyle{plain}

\end{document}